\theoremstyle{plain}% default
\newtheorem{teo}{Theorem}[section]
\newtheorem{lema}[teo]{Lemma}
\newtheorem{prop}[teo]{Proposition}
\newtheorem{coro}[teo]{Corollary}
\theoremstyle{definition}
\newtheorem{defi}[teo]{Definition}
\newtheorem{obs}[teo]{Remark}
\newtheorem{ejem}[teo]{Example}
\newcommand{\ts}[1]{\normalfont{\textsf{#1}}}
\newcommand{\K}{\ts k}
\renewcommand{\a}{\alpha}
\renewcommand{\b}{\beta}
\renewcommand{\c}{\gamma}
\renewcommand{\d}{ \delta}
\def\D{\mathop{\Delta}\nolimits}
\def\dim{\mathop{\rm dim}\nolimits}
\def\MM{\mathcal{M}}
\newcommand{\A}{\mathcal{A}}
\newcommand{\Fr}[1]{\operatorname{Frobdim}(#1)}
\newcommand{\mor}[3]{$#1\colon #2 \to #3$}
\title[ Open Frobenius Cluster Tilted algebras ]{Open Frobenius Cluster Tilted algebras}
\author[V. Gubitosi]{Viviana Gubitosi}
\address{Instituto de Matem\'{a}tica y Estad\'{\i}stica Rafael Laguardia, Facultad de Ingenier\'{\i}a - UdelaR, Montevideo, Uruguay, 11200 }
\email{gubitosi@fing.edu.uy}
\keywords{cluster tilted algebras; gentle algebras; open Frobenius algebras, Nearly Frobenius algebras}
\subjclass[2010]{ 16W99, 16G99}
\begin{document}
\maketitle

\begin{abstract}
 In this paper, we compute the Frobenius dimension of any cluster tilted algebra of finite type. Moreover, we give conditions on the bounded quiver of a cluster tilted algebra $\Lambda$  such that $\Lambda$ has non-trivial open Frobenius structures. 
\end{abstract}

\section*{Introduction}

Originally, the term Frobenius algebra referred to an algebra $A$ with
the property that $A\simeq A^*$ as right $A$-modules. Later, Nakayama provided many
equivalent definitions of Frobenius algebras. One equivalent definition, motivated by topological considerations, defines a Frobenius
algebra as an algebra $A$ equipped with a coalgebra structure where the
comultiplication is a map of $A$-modules. This topologically motivated definition
arose in order to rigorously establish the theorem that a two-dimensional topological
quantum field theory is essentially the same as a commutative Frobenius
algebra. The Frobenius algebras  without counit are called \textit{open Frobenius algebras}.
The open Frobenius algebras (also knew as nearly Frobenius algebras \cite{GLS}) give a natural generalization of Frobenius algebras. They were considered for the first time in 2004 by R. Cohen et V. Godin while they were studying the topological quantum field theory of the loop space of a closed
oriented manifold \cite{CG04}. Their main result states that the homology of the free loop space of a closed
oriented manifold is an open Frobenius algebra.  Although open Frobenius algebras generalizes Frobenius algebras, they behave quite differently.

In  \cite{GLS} the authors proved that the direct sums, tensor and quotient of open Frobenius algebras admit  natural
open Frobenius structures. They showed that the family of  open Frobenius structures over an algebra is a $\K$-vector space and they defined  the Frobenius dimension (or $\rm Frobdim$ for short) of an algebra as the dimension of this vector space. Later, in \cite{AGL} an algorithm to  compute the Frobenius dimension  for gentle algebras without oriented cycles is found.

Recently, the Hochschild homology and cohomology of open Frobenius algebras, and the algebraic structures that they have, have been studied by H. Abbaspour \cite{A15}. In particular, the Hochschild homology of an open Frobenius algebra has a natural structure of open Frobenius algebra, which provides a way to produce new open Frobenius algebras  from the already known ones. Another example of open Frobenius algebra is  the Hochschild cohomology $\rm HH^*(A)$ of a closed Frobenius algebra $A$ \cite{TZ06}.

 On the other hand, cluster categories were introduced in  \cite{BMRRT} as a representation theoretic framework for the  cluster algebras of Fomin and Zelevinski \cite{FZ02}. Given an hereditary finite dimensional algebra $H$  over  an algebraically closed field $\K$ the cluster category is defined to be $\mathcal{C}(H):=\mathcal{D}^b(H)/ \tau^{-1} [1]$, where $[1]$  denotes the  shift functor  and $\tau$ is the Auslander - Reiten translation in $\mathcal{D}^b(H)$. By a result of Keller \cite{K05}, the cluster category is triangulated.  For the  cluster category,  cluster tilting objects have been defined also  in \cite{BMRRT}, where in addition the authors showed that the clusters correspond to the tilting objects in the cluster category. The endomorphism algebras of the cluster tilting objects are called cluster-tilted algebras and were introduced by Buan, Marsh and Reiten \cite{BMR} and, independently in \cite{CCS} for type $\mathbb{A}$. Since then, cluster-tilted algebras have been the subject of several investigations, see, for instance, \cite{BMR2,ABS,ABS2,ABS3,ABS4,BV,BHL,V}. In \cite{BMR2} the authors gave an explicit description for the quivers of cluster-tilted algebras of finite representation type and as a consequence they showed that a (basic) cluster-tilted algebra of finite type is uniquely determined by its quiver. In \cite{BV} Buan and Vatne gave a criterion to decide whether two cluster-tilted algebras of type $\mathbb{A}$ are themselves derived equivalent or not. For type $\mathbb{E}$ the same work was done by Bastian, Holm and Ladkani in \cite{BHL}. For type $\mathbb{D}$ Vatne gave a complete description of the cluster-tilted algebras \cite{V}.

Our purpose in this paper is to study the open (or nearly) Frobenius structures that a cluster-tilted algebra of finite type can admit. This include examples of gentle algebras having  oriented cycles which generalize the main result of \cite{AGL}.

We now state the main results of this paper (for the definitions of the terms used, we refer the reader to sections 1 and 2  below).

\subsection*{Theorem A}\textit{If $\A$ is a  cluster tilted algebra of type $\mathbb{A}$, then $\A$  has  finite Frobenius dimension. Moreover,
$$\Fr\A= \sharp \mathcal{B} + \displaystyle{\sum_{b\in \mathcal{V_S}} \ell_{\rightarrow b}.\ell_{\leftarrow b} } $$}

\subsection*{Theorem B}\textit{If $\A$ is a  cluster tilted algebra of type $\mathbb{D}$, then $\A$  has  finite Frobenius dimension.}

\subsection*{Theorem C}\textit{If $\A$ is a non hereditary cluster tilted algebra of type $\mathbb{E}_6$, then $\Fr\A\geq 1$.}

\vspace{.5cm}

In particular we found a  large number of examples of algebras  that can be endowed with open Frobenius structures.

\vspace{.5cm}

The paper is organized as follows: In section 1 we recall facts about quivers and algebras, cluster-tilted algebras and open Frobenius  algebras. We use this section to fix some notation. Also  we establish the facts about cluster tilted algebras of finite type  that will be used in the sequel. In section 2 we compute the Frobenius dimension for cluster tilted algebra of type $\mathbb{A}$. We finish section 2 with some consequences, among which, we show that the Frobenius dimension is not invariant under sink/source
mutations and the fact of have non-zero Frobenius dimension neither. Section 3 is devoted to compute the Frobenius dimension for cluster tilted algebras of type $\mathbb{D}$ or find  a lower bound. Finally, in section 4 we prove that cluster tilted algebras of type $\mathbb{E}_6$ admit at least one non-trivial open  Frobenius structure.

%we find a lower bound for the Frobenius dimension of cluster tilted algebra of type $\mathbb{E}_6$.

\section{Preliminaries}

\subsection{Quivers and  Algebras}

While we briefly recall some  concepts concerning bound quivers and algebras, we refer the reader to \cite{ASS06} or \cite{ARS95}, for instance, for unexplained notions.

Let $\K$   be an algebraically closed field. A quiver $Q$ is the data of two sets, $Q_0$ (the \textit{vertices}) and $Q_1$ (the \textit{arrows}) and two maps \mor{s,t}{Q_1}{Q_0} that assign to each arrow $\a$ its \textit{source} $s(\a)$ and its \textit{target} $t(\a)$. We write \mor{\a}{s(\a)}{t(\a)}. If $\b\in Q_1$ is such that $t(\a)=s(\b)$ then the composition of $\a$ and $\b$ is the path $\a\b$. This extends naturally to paths of arbitrary positive length. The \emph{path algebra} $\K Q$ is the $\K$-algebra whose basis is the set of all paths in $Q$, including one stationary path $e_x$ at each vertex $x\in Q_0$, endowed with the  multiplication induced from the composition of paths. In case $|Q_0|$ is finite, the sum of the stationary paths  - one for each vertex - is the identity.

If the quiver $Q$ has no oriented cycles, it is called \emph{acyclic}. A \emph{relation} in $Q$ is a $\K$-linear combination of paths of length at least $2$ sharing source and target.  A relation which is a path is called \emph{monomial}, and the relation is \emph{quadratic} if the paths appearing in it have all length $2$. Let $\mathcal{R}$ be a set of relations.
 Given $\mathcal{R}$ one can consider the two-sided ideal of $\K Q$ it generates $I=\langle \mathcal{R}\rangle \subseteq  \langle Q_1 \rangle^2$. It is called \emph{admissible} if there exists a natural number $r\geqslant 2$ such that $\left\langle Q_1 \right\rangle^r \subseteq I$. The pair $(Q,I)$ is a \emph{bound quiver}, and associated to it is the algebra $A=\K Q/I$.
It is known that any finite dimensional basic algebra over an algebraically closed field is obtained in this way, see \cite{ASS06}, for instance.\\

We are interested in a particular family of path algebras, called cluster-tilted algebras.

\subsection{Cluster-tilted Algebras}

Let $\K$ be an algebraically closed field. We consider connected hereditary finite dimensional $\K$-algebras. Any such algebra $ H$ is Morita equivalent to a path algebra $\K Q$ for some finite quiver $Q$. Furthermore, we assume $H$ is of finite representation type, that is, there is only a finite number of indecomposable objects, up to isomorphism, in the category $\rm mod$ $H$ of finitely  generated $H$-modules. It is well know that this is the case if and only if the underlying graph of $Q$ is a Dynkin graph $\mathbb{A},\mathbb{D}$ or $\mathbb{E}$.

Let $\mathcal{D}$ be the bounded derived category $\mathcal{D}^b(H)$. It is equipped with a shift functor $[1]$ and a translation functor $\tau$ with quasi-inverse $\tau^{-1}$. The composition $F=\tau^{-1}[1]$ is an auto-equivalence in $\mathcal{D}$. Then we can define the \textit{cluster category} as the orbit category  $\mathcal{C}:=\mathcal{D}/ F$. An objet $T$ of  $\mathcal{C}$ is called a tilting object if $\rm Ext^1_{\mathcal{C}}(T,T)=0$ and $T$ is maximal with respect to this property. The endomorphism algebra $\rm End_{\mathcal{C}}(T)$ is called a \textit{cluster-tilted algebra}.  See \cite{BMRRT,BMR} for more details.

When $Q$ is a Dynkin quiver of types $\mathbb{A}$, $\mathbb{D}$ or $\mathbb{E}$, the
corresponding cluster-tilted algebras are said to be of Dynkin type.
These algebras have been investigated in \cite{BMR}, where it is
shown that they are schurian and moreover they can be defined by using
only zero and commutativity relations that can be extracted from their
quivers in an algorithmic way. The possible quivers are precisely the quivers in the mutation class
of the Dynkin quiver. By a result of Fomin and
Zelevinsky~\cite{FominZelevinsky03}, the mutation class of a Dynkin quiver is finite. Moreover, the quivers in the mutation classes of Dynkin quivers are
explicitly known; for type $\mathbb{A}$ they can be found in~\cite{BV}, for type $\mathbb{D}$ in~\cite{V} and for type $\mathbb{E}$ they can be enumerated using a
computer, for example by the Java applet~\cite{Keller-software}.

\subsection{Open Frobenius algebras}
The Frobenius algebras without counit are called open  Frobenius algebras. They
 give a natural generalization of Frobenius algebras. We briefly recall some  concepts concerning open Frobenius algebras.

An algebra $\A$ with  multiplication $m:\A\otimes \A \rightarrow \A$ is an \emph{open Frobenius algebra} if it admits a linear map $\Delta:\A \rightarrow \A \otimes \A$ such that  $\Delta$ is an  $\mathcal{A}$-bimodule morphism, i.e.  $$\bigl(m\otimes 1\bigr)\bigl(1\otimes\Delta\bigr)=\Delta\circ m=\bigl(1\otimes m\bigl)\bigl(\Delta \otimes 1\bigr)$$

Observe that any open Frobenius coproduct in $\A$ is determined by its value on  the unit  $1$ of $\A$.  That is,  if $\Delta:\A \rightarrow \A \otimes \A$ is an open Frobenius coproduct, then $\Delta(x)=\bigl(x\otimes 1\bigr)\Delta(1)=\Delta(1)\bigl(1\otimes x\bigr)$ for all $x\in \A$.

If $\A=\K Q/I$ is a path algebra, the stationary paths $e_x$ with  $x\in Q_0$ are idempotents, then $\Delta(e_x)=\bigl(e_x\otimes 1\bigr)\Delta(e_x)=\Delta(e_x)\bigl(1\otimes e_x\bigr)$.
If in addition $|Q_0|$ is finite the sum $\sum _{x\in Q_0} e_x=1$. \\

%$$\xymatrix{\mathcal{A}\otimes \mathcal{A}\ar[r]^{m}\ar[d]_{\Delta\otimes 1}& \mathcal{A}\ar[d]^{\Delta}\\
%\mathcal{A}\otimes \mathcal{A}\otimes \mathcal{A}\ar[r]_{1\otimes m}&\mathcal{A}\otimes \mathcal{A}
%},\quad \xymatrix{\mathcal{A}\otimes \mathcal{A}\ar[r]^{m}\ar[d]_{1\otimes\Delta}& \mathcal{A}\ar[d]^{\Delta}\\
%\mathcal{A}\otimes \mathcal{A}\otimes \mathcal{A}\ar[r]_{m\otimes 1}&\mathcal{A}\otimes \mathcal{A}
%}$$ commute.

Following \cite{GLS} the \emph{Frobenius space} associated to an algebra $\A$ is the $\K$-vector space $\mathcal{E}$ of all  possible coproducts $\Delta$ that make $\A$ into an open Frobenius algebra. Its dimension over $\K$ is called the \emph{Frobenius dimension} of $\A$, i.e, $\rm Frobdim(\A)=\dim_{\K}(\mathcal{E})$.

%Clearly, we have the following:
%\begin{obs}
%If $\A'$ is the opposite  algebra of $\A$ then $\Fr\A=\Fr{\A'}$.
%\end{obs}

\section{Open Frobenius Cluster-Tilted Algebras of type $\mathbb{A}$}

Let $\MM^A_k$ be the mutation class of $\mathbb{A}_k$. This is the  set of all  connected quivers with $k$ vertices that satisfy the following \cite{BV}:
\begin{itemize}
\item all non-trivial cycles are  3-cycles,
\item every vertex has valency at most four,
\item if a vertex has valency four, then two of its adjacent arrows belong
  to one 3-cycle, and the other two belong to another 3-cycle,
\item if a vertex has valency three, then two of its adjacent arrows belong to
  a 3-cycle, and the third arrow does not belong to any 3-cycle.
\end{itemize}

By a \emph{3-cycle}  we mean an oriented  cycle of length 3.  Then $A=\K Q/ I $ is a cluster-tilted algebra of type $\mathbb{A}_k$ if and only if   $Q\in \MM^A_k$ and every 3-cycle is \textit{saturated} (i.e, the composition of two consecutive arrows belong to the ideal $I$) \cite{BMR}. \\

%Recall that a \emph{gluing} of two quivers with relations $Q$ and $Q'$ with respect to a vertex  $v$ in $Q$ and $v'$ in $Q'$ is obtained
%by taking the disjoint union $Q \sqcup Q'$ and identifying the vertices $v$ and $v'$. The relations are induced from those of $Q$ and $Q'$ (i.e.\
%there are no additional relations).

\begin{defi} Given $Q\in \MM^A_k$ a subquiver of $Q$ is said to be a \emph{tail}  if is a Dynkin diagram of type $\mathbb{A}_n$ with $n \geq 2$ maximal and any orientation.
\end{defi}

%\begin{defi} Given $Q\in \MM^A_k$ a subquiver of $Q$ is said to be:
%\begin{itemize}
%  \item [$a)$] a \emph{tail}  if is a Dynkin diagram of type $\mathbb{A}_n$ with $n \geq 2$ maximal and any orientation,
%  %\item [$b)$] a \emph{free tail} if is a tail where no quiver  is glued to one of their ends,
%%  \item [$c)$] a \emph{free cycle} if is a saturated 3-cycle where just one tail is glued to one of their  vertices and  the other two  vertices have valency two.
%%  \item [$d)$] an \emph{almost free cycle} if is a saturated 3-cycle where two tails are glued to two of their  vertices and the third vertex has valency two.
%\end{itemize}
%\end{defi}

%\begin{obs}
%A quiver  $Q$ without free cycles has at least one free tail.
%\end{obs}

Throughout this section $Q$  is a quiver in $\MM^A_k$, every 3-cycle is saturated and $\Delta$ is any open Frobenius structure for the cluster tilted algebra associated.

A tail with the orientation $\SelectTips{eu}{10}\xymatrix@R=.8pc@C=.5pc{ \scriptstyle{v_1} \ar[rr]^{\a_1} & & \scriptstyle{v_2} \ar[rr]^{\a_2} & & \scriptstyle{v_3}  \ar@{.}[rr] && \scriptstyle{v_{n-1}}  \ar[rr]^{\a_{n-1}}  &&  \scriptstyle{v_{n}} }$ will be called a \emph{lineal tail}. According to \cite[Lemma 1]{AGL} if $Q$ is a lineal tail, $A=\K Q$  admits only one open Frobenius structure (called \emph{the lineal structure}) given by:

\begin{eqnarray*}
\D(e_{v_1})& = & \a_1\cdots\a_{n-1}\otimes e_{v_1}\\
\D(e_{v_i})& = & \a_i\cdots\a_{n-1}\otimes \a_1\cdots\a_{i-1}\\
\D(e_{v_n})& = & e_{v_n}\otimes \a_1\cdots\a_{n-1}
\end{eqnarray*}

\begin{lema}\label{tail-con-vertice-de-valencia-dos}
Let $\mathcal{T}$ be a tail of $Q$ having a source or a sink  of valency two.  Then $\D(e_v)=0$   for all $v\in \mathcal{T}_0$.
\end{lema}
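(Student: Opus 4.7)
The plan is to first force $\Delta(e_{v_0})=0$ at the distinguished source (or sink) $v_0$ of valency two, and then to propagate the vanishing outward along $\mathcal{T}$ via the bimodule identity.

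Assume (the sink case is dual) that $v_0\in\mathcal{T}_0$ is a source of valency two, with outgoing arrows $\alpha\colon v_0\to u$ and $\beta\colon v_0\to w$. Since $v_0$ is a source of valency two, it cannot lie on any $3$-cycle of $Q$, so neither $\alpha$ nor $\beta$ appears in a saturated relation of $A$. The absence of arrows into $v_0$ gives $Ae_{v_0}=\K e_{v_0}$, and therefore $\Delta(e_{v_0})=x\otimes e_{v_0}$ for some $x\in e_{v_0}A$. The bimodule identity $\Delta(e_{v_0})\cdot\alpha=\alpha\cdot\Delta(e_u)$ rewrites as $x\otimes\alpha=\alpha\cdot\Delta(e_u)$; the right-hand side has its left tensor factor in $\alpha A$, so comparing coefficients in the path basis of $A\otimes A$ forces every basis path occurring in $x$ to begin with $\alpha$. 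The symmetric argument with $\beta$ forces these paths also to begin with $\beta$. Since no path can start with two distinct arrows (and $e_{v_0}\notin \alpha A\cup \beta A$), this yields $x=0$, hence $\Delta(e_{v_0})=0$.

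Having killed $\Delta$ at $v_0$, I propagate outward. From $\alpha\cdot\Delta(e_u)=\Delta(e_{v_0})\cdot\alpha=0$ together with the fact that no relation of $A$ begins with $\alpha$, left multiplication by $\alpha$ is injective on the path basis of $e_uA$, so expanding $\Delta(e_u)$ in that basis and matching coefficients gives $\Delta(e_u)=0$; identically $\Delta(e_w)=0$. I then iterate along $\mathcal{T}$: for each next tail vertex $v'$ connected to an already-treated vertex $v$ by a tail arrow $\gamma$ (in either direction), the identity yields $\gamma\cdot\Delta(e_{v'})=0$ or $\Delta(e_{v'})\cdot\gamma=0$, from which comparing basis coefficients produces $\Delta(e_{v'})=0$.

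The main technical obstacle is this last propagation step when a tail arrow $\gamma$ farther from $v_0$ happens to lie on a $3$-cycle of $Q$ through a vertex outside $\mathcal{T}$: the corresponding saturated relation makes multiplication by $\gamma$ fail to be injective, so the bimodule identity only confines $\Delta(e_{v'})$ to the complement of the kernel. This is resolved by combining the tail-arrow constraint with the bimodule constraint coming from the other arrow of the $3$-cycle incident to $v'$ (and, if needed, with the next tail arrow at $v'$), whose own relations kill the remaining possibilities and yield $\Delta(e_{v'})=0$. This local bookkeeping around each $3$-cycle meeting $\mathcal{T}$ is the most delicate part of the argument.
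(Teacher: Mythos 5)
Your first three paragraphs are correct and constitute a genuinely different route from the paper, whose entire proof is the citation of \cite[Lemma 2]{AGL} (a statement proved there for gentle algebras \emph{without} oriented cycles). Your argument is self-contained and, usefully, makes explicit why that acyclic result localizes to a tail sitting inside an algebra that does have oriented $3$-cycles: the two arrows at the valency-two source force the left factor of $\Delta(e_{v_0})=x\otimes e_{v_0}$ into $\alpha A\cap\beta A=0$, and the vanishing then propagates because an arrow not lying on a $3$-cycle occurs in no relation of $A$ (all relations are compositions of consecutive arrows of a saturated $3$-cycle), so multiplication by such an arrow carries the path basis injectively to linearly independent elements. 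Since every vertex of $\mathcal{T}$ is reached from $v_0$ by tail arrows, this proves the lemma in the form the paper actually uses it (e.g.\ in Corollary \ref{tail con fuente/pozo + ciclo}, where the tail stops at the vertex where the $3$-cycle attaches).

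The last paragraph, however, is a genuine gap: the ``local bookkeeping'' you defer cannot be carried out, because in the configuration you describe the conclusion actually fails at the far vertex. Take $Q$ with arrows $2\to1$, $2\to3$, $3\to4$, $\delta\colon4\to5$, $\varepsilon\colon5\to6$, $\zeta\colon6\to4$ and the saturated $3$-cycle $\delta\varepsilon\zeta$. Vertex $2$ is a source of valency two, and the full subquiver on $\{1,2,3,4,5\}$ is a maximal type $\mathbb{A}_5$ subquiver whose arrow $\delta$ lies on the $3$-cycle. The element $\Delta(1)=\varepsilon\otimes e_5+e_6\otimes\varepsilon$ (the linear structure on the basis path $\varepsilon$) satisfies $\theta\Delta(1)=\Delta(1)\theta$ for every arrow $\theta$ --- the only nonzero case is $\theta=\varepsilon$, where both sides equal $\varepsilon\otimes\varepsilon$, every other product dying on a relation of the cycle or on a source/target mismatch --- so it is an open Frobenius coproduct with $\Delta(e_5)=\varepsilon\otimes e_5\neq0$. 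The constraint from $\delta$ only confines the left factors of $\Delta(e_5)$ to $\K\varepsilon$, and the constraint from $\varepsilon$ merely ties $\Delta(e_5)$ to the equally unknown $\Delta(e_6)$; no combination of the incident-arrow identities eliminates this solution. The correct repair is not more bookkeeping but the observation that this case must be excluded: vertex $5$ is a special vertex in the sense of Definition \ref{def de camino base y vertice especial} and does not belong to the tail as the paper uses the notion, so no arrow of a tail lies on a $3$-cycle and your second and third paragraphs already cover all of $\mathcal{T}_0$.
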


\begin{proof}
It is a direct consequence of \cite[Lemma 2]{AGL}.
\end{proof}

Summarizing we have the following  result for hereditary cluster tilted algebras of type $\mathbb{A}$.\

\begin{prop}
An hereditary cluster tilted algebra $A=\K Q$ of type $\mathbb{A}$ has non zero Frobenius dimension if and only if  $Q$ is a  linearly oriented tail.
\end{prop}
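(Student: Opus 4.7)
The plan is to reduce the claim to a combination of the explicit lineal-structure formula recalled above and Lemma \ref{tail-con-vertice-de-valencia-dos}. Since every $3$-cycle of a quiver $Q\in \MM^A_k$ must be saturated, any non-trivial cycle of $Q$ contributes a non-zero relation to $I$. Hence, under the hereditary hypothesis $I=0$, the quiver $Q$ is acyclic with underlying graph of type $\mathbb{A}_k$; in particular $Q$ is a single path and coincides with its own maximal tail.

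For the sufficiency direction, if $Q$ is linearly oriented then the explicit coproduct displayed just above Lemma \ref{tail-con-vertice-de-valencia-dos} provides a non-zero element of the Frobenius space, so $\Fr{A}\geq 1>0$.

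For the necessity direction, I would argue by contrapositive. Assume $Q$ is not linearly oriented. Traversing the underlying path of $Q$ from one endpoint to the other, the arrows must reverse direction at some internal vertex $v$; any such $v$ has valency two and is either a source or a sink of $Q$. Applying Lemma \ref{tail-con-vertice-de-valencia-dos} with $\mathcal{T}=Q$ yields $\Delta(e_w)=0$ for every $w\in Q_0$. Since $Q_0$ is finite, $1=\sum_{w\in Q_0} e_w$, and since any open Frobenius coproduct is determined by its value on the unit,
\[
\Delta(x)=\Delta(1)(1\otimes x)=\Bigl(\sum_{w\in Q_0}\Delta(e_w)\Bigr)(1\otimes x)=0
\]
for every $x\in A$. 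Thus the Frobenius space of $A$ is trivial, i.e.\ $\Fr{A}=0$.

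The argument is essentially a bookkeeping step: the substantive input is the already-established Lemma \ref{tail-con-vertice-de-valencia-dos}, and the only genuinely new observation is the elementary combinatorial fact that any non-linearly-oriented acyclic orientation of $\mathbb{A}_k$ must present an internal source or sink. I therefore anticipate no real obstacle beyond making sure the hereditary hypothesis is correctly translated into the absence of $3$-cycles in $Q$ so that $Q$ itself qualifies as the tail to which the lemma is applied.
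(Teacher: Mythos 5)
Your proposal is correct and follows essentially the same route as the paper: reduce to the observation that the quiver is a tail, use the lineal structure of \cite[Lemma 1]{AGL} for sufficiency, and Lemma \ref{tail-con-vertice-de-valencia-dos} (via an internal source or sink of valency two) for necessity. You merely spell out the details that the paper's two-line proof leaves implicit.
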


\begin{proof} It is enough to observe that the quiver of an  hereditary cluster tilted algebras of type $\mathbb{A}$ is  a tail. Then the result follows from \cite[Lemma 1]{AGL} and previous lemma. %lemma \ref{tail-con-vertice-de-valencia-dos}.
\end{proof}

Now, we are interested in compute the Frobenius dimension for non hereditary cluster tilted algebras of type $\mathbb{A}$. \

\begin{defi}\label{def de camino base y vertice especial} Given $A=\K Q/ I $ we say that
\begin{itemize}
  \item [a)] a vertex $v\in Q_0$ is an \emph{special vertex} if $v$ has valency two and there are $\alpha, \beta \in Q_1$ with $s(\b)=t(\a)=v$ and $\a\b\in I$. Let denote by $\mathcal{V_S}$ the set of all special vertices.
  \item [b)] a \emph{basis path }  is a path $\mathcal{P} : v \rightsquigarrow v'$ such that $v,v'$ are or an special vertex  or a source or a sink of valency one. We denote by   $\mathcal{B}$ the set of all basis paths.
\end{itemize}
\end{defi}

The following figure show  the three types of basis paths $\mathcal{P} : 1 \rightsquigarrow n$.

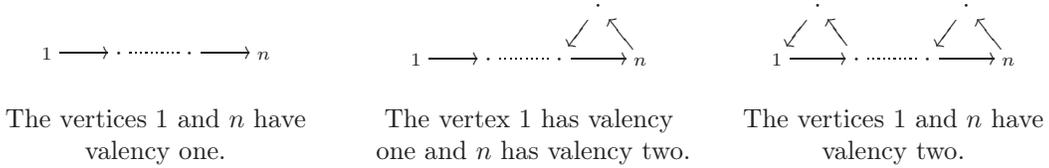
\begin{figure}[H]
$$\begin{array}{ccccc}
  \SelectTips{eu}{10}\xymatrix@R=.8pc@C=.5pc{&&&&&&\\
\scriptstyle{1}   \ar[rr] & & \cdot  \ar@{.}[rr] && \cdot  \ar[rr]  && \scriptstyle{n}  } & & \SelectTips{eu}{10}\xymatrix@R=.8pc@C=.5pc{&& && & \cdot \ar[dl]& \\
\scriptstyle{1}   \ar[rr] & & \cdot  \ar@{.}[rr] && \cdot  \ar[rr]  && \scriptstyle{n} \ar[lu] } & & \SelectTips{eu}{10}\xymatrix@R=.8pc@C=.5pc{& \cdot \ar[dl]& && & \cdot \ar[dl]& \\
\scriptstyle{1}   \ar[rr] & & \cdot \ar[ul] \ar@{.}[rr] && \cdot  \ar[rr]  && \scriptstyle{n} \ar[lu] } \\
   & &&  &  \\
  \text{ The vertices $1$ and $n$ have } && \text{ The vertex $1$ has valency }  && \text{The vertices $1$ and $n$ have} \\
\text{valency one.}&& \text{ one and  $n$ has valency two.}&& \text{valency two.}
\end{array}$$
\caption{Types of basis paths}
\end{figure}

Given a vertex $b$ in a quiver $Q$ we will use the symbol $\ell_{\rightarrow b}$ to denote the length of the largest path with target $b$ and $\ell_{\leftarrow b}$ the length of the largest path with source  $b$.\\

We start describing  a coproduct $\D$ over the vertices of an arbitrary basis path of any type and computing the corresponding Frobenius dimension.

\begin{lema}\label{lineal-freecycle}
Let $\A$ be the algebra given by the bound quiver

$$\SelectTips{eu}{10}\xymatrix@R=.8pc@C=.5pc{&&&&&&&& & \scriptstyle{b} \ar[rd]^{\gamma_2}  & &&& \\
 \scriptstyle{1} \ar[rr]_{\a_1}& & \scriptstyle{2}  \ar[rr]_{\a_2} &&   \scriptstyle{3}  \ar@{.}[rr]& &  \scriptstyle{n}  \ar[rr]_{\a_{n}}  && \scriptstyle{a} \ar[ru]^{\gamma_1}  &&  \scriptstyle{c} \ar[ll]^{\gamma_3} }$$

Then $\Fr\A=2+\ell_{\rightarrow b}\ell_{\leftarrow b}+\ell_{\rightarrow c}\ell_{\leftarrow c}$.
\end{lema}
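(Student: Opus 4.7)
The plan is to determine explicitly the $\K$-vector space $\mathcal{E}$ of open Frobenius coproducts on $\A$. Since any such $\D$ is an $\A$-bimodule map, it is completely determined by $\D(1)=\sum_v \D(e_v)$ and each $\D(e_v)$ lies in $e_v\A\otimes \A e_v$. First I would enumerate a basis of every $e_v\A\otimes \A e_v$ by listing the nonzero paths $p$ with $s(p)=v$ and the nonzero paths $q$ with $t(q)=v$, using the saturation relations $\c_1\c_2=\c_2\c_3=\c_3\c_1=0$ to decide which compositions survive, and then write $\D(e_v)=\sum_{p,q} c^v_{p,q}\, p\otimes q$ with indeterminate coefficients.

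Next I would translate the bimodule condition into the arrow-by-arrow compatibility $\a\cdot \D(e_{t(\a)})=\D(e_{s(\a)})\cdot \a$, which yields linear equations among the $c^v_{p,q}$; by linearity and induction on path length these suffice. Processing the tail arrows $\a_1,\dots,\a_n,\c_1$ in order propagates the constraints through the chain of vertices $1\to 2\to\cdots\to n\to a\to b$: at each step most coefficients are forced to zero, leaving a single scalar that parameterises the lineal structure on the basis path $\a_1\cdots\a_n\c_1$, in the style of \cite[Lemma 1]{AGL}. The compatibility at $\c_2$ then connects $\D(e_b)$ with $\D(e_c)$ and isolates an independent scalar parameterising the lineal structure on the basis path $\c_2$. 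This accounts for the summand $\sharp\mathcal{B}=2$.

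The additional summand $\ell_{\rightarrow b}\ell_{\leftarrow b}+\ell_{\rightarrow c}\ell_{\leftarrow c}$ comes from tensors that the saturation relations leave \emph{unconstrained}. In $\D(e_b)$ every term $\c_2\otimes q$, with $q$ a nontrivial path ending at $b$, is killed on the left by $\c_1\c_2=0$ and on the right by $q\c_2=0$ (since each such $q$ ends in $\c_1$), so the coefficient $c^b_{\c_2,q}$ is not restricted by any arrow relation. The set of such $q$ has exactly $\ell_{\rightarrow b}$ elements, and the only nontrivial path starting at $b$ is $\c_2$, giving $\ell_{\leftarrow b}\ell_{\rightarrow b}$ free parameters. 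The identical analysis at $c$ shows that $\c_3\otimes \c_2$ is unconstrained in $\D(e_c)$, supplying the final $\ell_{\leftarrow c}\ell_{\rightarrow c}=1$ parameter.

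Putting everything together I would check that the resulting $2+\ell_{\rightarrow b}\ell_{\leftarrow b}+\ell_{\rightarrow c}\ell_{\leftarrow c}$ coproducts are valid open Frobenius structures (a routine verification, since every potential obstruction reduces to an instance of $\c_i\c_{i+1}=0$) and that they are linearly independent, which is clear because they affect pairwise distinct basis tensors of the various $\D(e_v)$. The main obstacle will be the careful bookkeeping in the second and third paragraphs: one must verify coefficient by coefficient that no compatibility coming from a longer composition through the $3$-cycle secretly restricts the extra parameters at the special vertices $b$ and $c$.
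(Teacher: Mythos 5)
Your proposal is correct and follows essentially the same route as the paper: write each $\D(e_v)$ in the basis of $e_v\A\otimes\A e_v$, impose the bimodule condition arrow by arrow to isolate the two lineal structures on the basis paths $\a_1\cdots\a_n\c_1$ and $\c_2$, and observe that the terms $\c_2\otimes q$ at $b$ and $\c_3\otimes\c_2$ at $c$ are annihilated on both sides by the saturation relations and hence contribute $\ell_{\rightarrow b}\ell_{\leftarrow b}+\ell_{\rightarrow c}\ell_{\leftarrow c}$ free parameters. The bookkeeping you defer (in particular killing the candidate terms $e_a\otimes\c_3$ and $\c_1\otimes\c_3$ at the valency-three vertex $a$ via the condition at $\a_n$) does go through exactly as you anticipate.
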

%$\Fr\A=\ell_{\rightarrow b}+3=n+4$

\begin{proof}
Every idempotent $e_x$ satisfies  $\Delta(e_x)=\bigl(e_x\otimes 1\bigr)\Delta(e_x)\bigl(1\otimes e_x\bigr)$. Then, the coproduct in $e_1$  and $e_b$ are  given by:
 \hspace{.5cm} $\D(e_1)=A  e_1\otimes e_1+\sum_{i=1}^n B_i  \a_1\cdots\a_i\otimes e_1+F \a_1\cdots\a_n\c_1\otimes e_1$  \hspace{.5cm} and \newline  $\D(e_b)=A'  e_b\otimes e_b +B \c_2\otimes e_b +Ce_b\otimes \c_1+\sum_{i=1}^n C_i  e_b\otimes\a_i\cdots\a_n\c_1 +\sum_{i=1}^n D_i  \c_2\otimes\a_i\cdots\a_n\c_1+D\c_2\otimes \c_1 .$\\

%\begin{eqnarray*}
%\D(e_1)&=&A  e_1\otimes e_1+\sum_{i=1}^n B_i  \a_1\cdots\a_i\otimes e_1+F \a_1\cdots\a_n\c_1\otimes e_1\\
%\D(e_b)&=&A'  e_b\otimes e_b +B \c_2\otimes e_b +Ce_b\otimes \c_1+\sum_{i=1}^n C_i  e_b\otimes\a_i\cdots\a_n\c_1 +\sum_{i=1}^n D_i  \c_2\otimes\a_i\cdots\a_n\c_1+E\c_2\otimes \c_1
%\end{eqnarray*}

Using that $\D(\a_1\cdots\a_n\c_1)=\D(e_1\a_1\cdots\a_n\c_1)=\D(\a_1\cdots\a_n\c_1e_b)$ and $\c_1\c_2=0$ we deduce that $F=C_1$ and $A=A'=C=B_i=C_j=0$ for  $i=1,\cdots, n$ and  $j=2,\cdots, n$. Therefore
\begin{eqnarray*}
\D(e_1)&=&F \a_1\cdots\a_n\c_1\otimes e_1\\
\D(e_b)&=& B \c_2\otimes e_b + F  e_b\otimes\a_1\cdots\a_n\c_1 +\sum_{i=1}^n D_i  \c_2\otimes\a_i\cdots\a_n\c_1+D\c_2\otimes \c_1
\end{eqnarray*}

A general expression for $\D(e_i)$ is $\D(e_i)= \sum_{\rho,\eta} G_{\rho,\eta} \rho \otimes \eta$  with $s(\rho)=t(\eta)=i$. Using that $\D(\a_1\cdots \a_i)=\D(e_1\a_1\cdots \a_i)=\D(\a_1\cdots \a_ie_i)$ we deduce that $\D(e_i)=F \a_i\cdots\a_n\c_1\otimes \a_1\cdots \a_{i-1}$ for $i=2,\cdots ,n$.
Similarly, $\D(e_a)=F \c_1\otimes \a_1\cdots\a_n$.

Finally $\D(e_c)= \sum_{\rho,\eta} G_{\rho,\eta} \rho \otimes \eta$  with $s(\rho)=t(\eta)=c$. Comparing the expressions $\D(\c_2e_c)$ and $\D(e_b\c_2)$ we obtain that $\D(e_c)=B e_c\otimes \c_2 +E \c_3\otimes \c_2$.\

Observe that $\ell_{\rightarrow c}\ell_{\leftarrow c}=1=\sharp \{E\}$, $ \ell_{\rightarrow b}\ell_{\leftarrow b}= \sharp \{D_i,D\}$ and we have two independent linear structures associated to the scalars $B$ and $F$ which completes the proof.
\end{proof}

\begin{coro}\label{tail con fuente/pozo + ciclo}
Let $\A$ be an algebra as above such that in the tail $\mathcal{T}$: $\SelectTips{eu}{10}\xymatrix@R=.4pc@C=.4pc{
 \scriptstyle{1} \ar@{-}[rr]^{\a_1}& & \scriptstyle{2}  \ar@{-}[rr]^{\a_2} &&   \scriptstyle{3}  \ar@{.}[rr]& &  \scriptstyle{n}  \ar@{-}[rr]^{\a_{n}}  && \scriptstyle{a}}$ there is a vertex $2\leq i\leq n$  which is a source or a sink. Then $\D_{\mid _{\mathcal{T}}}\equiv 0$ and $\Fr\A=1+\ell_{\rightarrow b}+\ell_{\leftarrow c}$.
\end{coro}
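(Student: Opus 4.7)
The plan is to first invoke Lemma~\ref{tail-con-vertice-de-valencia-dos}: since the tail $\mathcal{T}$ contains a source or a sink of valency two at the vertex $i$ with $2\le i\le n$, every idempotent $e_v$ with $v\in\mathcal{T}_0=\{1,\ldots,n,a\}$ already satisfies $\Delta(e_v)=0$. This yields the first claim and reduces the problem to describing $\Delta$ on $e_b$ and $e_c$, i.e.\ on the two vertices of the 3-cycle not lying on $\mathcal{T}$.

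Next I would expand $\Delta(e_b)=\sum G_{\rho,\eta}\,\rho\otimes\eta$ and $\Delta(e_c)=\sum H_{\rho,\eta}\,\rho\otimes\eta$ and exploit the bimodule identity together with $\Delta(e_a)=0$. Since $\gamma_2\gamma_3=0$, the only nonzero paths starting at $b$ are $e_b$ and $\gamma_2$. The identity $\gamma_1\,\Delta(e_b)=\Delta(e_a)\,\gamma_1=0$ combined with $\gamma_1\gamma_2=0$ then forces $G_{e_b,\eta}=0$ for every $\eta$, so $\Delta(e_b)$ consists only of terms $\gamma_2\otimes\eta$. A mirror argument at $c$, using $\Delta(\gamma_3)=\gamma_3\,\Delta(e_a)=0$, forces $\Delta(e_c)$ to consist only of terms $\rho\otimes\gamma_2$. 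Comparing the two expressions $\Delta(\gamma_2)=\gamma_2\,\Delta(e_c)=\Delta(e_b)\,\gamma_2$ then identifies the single scalar $G_{\gamma_2,e_b}$ with $H_{e_c,\gamma_2}$ and annihilates every other $H_{e_c,\eta}$.

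The remaining freedom is then straightforward to count: one shared scalar $G_{\gamma_2,e_b}=H_{e_c,\gamma_2}$; one scalar $G_{\gamma_2,\eta}$ for each nontrivial path $\eta=p\gamma_1$ ending at $b$, which since $Q$ is of type $\mathbb{A}$ is parametrised bijectively by the lengths $1,2,\ldots,\ell_{\rightarrow b}$; and symmetrically one scalar $H_{\rho,\gamma_2}$ for each of the $\ell_{\leftarrow c}$ nontrivial paths $\rho$ starting at $c$. Summing gives $\Fr\A=1+\ell_{\rightarrow b}+\ell_{\leftarrow c}$.

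The main technical step I expect to need some care is checking that every such assignment of scalars actually extends to a genuine bimodule map. However, because $\Delta|_{\mathcal{T}}\equiv 0$ and the only relations of $A$ are the three saturation relations of the 3-cycle, each bimodule identity to be verified either collapses trivially on the tail side or reduces to one of the identifications already established above, so this verification is essentially automatic.
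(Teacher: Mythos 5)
Your proof is correct and follows essentially the same route as the paper: Lemma \ref{tail-con-vertice-de-valencia-dos} annihilates $\Delta$ on the tail, and the bimodule identities together with the saturation relations $\gamma_1\gamma_2=\gamma_2\gamma_3=\gamma_3\gamma_1=0$ reduce $\Delta(e_b)$ to terms $\gamma_2\otimes\eta$ and $\Delta(e_c)$ to terms $\rho\otimes\gamma_2$ with the single identification $G_{\gamma_2,e_b}=H_{e_c,\gamma_2}$, yielding $1+\ell_{\rightarrow b}+\ell_{\leftarrow c}$. The only (harmless) difference is that the paper derives $\Delta(e_a)=0$ separately from $0=\Delta(\alpha_n)=\Delta(\alpha_n e_a)$ rather than reading $a\in\mathcal{T}_0$ off the lemma, and, like you, it leaves the realizability of the surviving scalars implicit.
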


%\Fr\A=\ell_{\rightarrow b}+2

\begin{proof}
From lemma \ref{tail-con-vertice-de-valencia-dos} it is clear that $\D(e_i)=0$ for    $i=1,\cdots ,n$. The coproduct $\D$ could change for the vertices $a,b$ and $c$. There is no loss of generality in assuming $\SelectTips{eu}{10}\xymatrix@R=.6pc@C=.6pc{\a_n:n \ar[r]&  a}$, then $0=\D(e_n\a_n)=\D(\a_n)=\D(\a_n e_a)$ implies $\D(e_a)=0$ because $\a_n\mu\neq 0$ for all $\mu$ starting at $a$. Similarly we can see that $\D(e_c)$ does not change and $\D(e_b)= B \c_2\otimes e_b +  \displaystyle{\sum_{\mu / t(\mu)=a} D_{\mu}  \c_2\otimes\mu\c_1}$ ; with  $\mu\neq \c_3$.
\end{proof}

%\mu / t(\mu)=a

\begin{coro}\label{lineal-freecycle-lineal}
Let $\A$ be the algebra given by the bound quiver

$$\SelectTips{eu}{10}\xymatrix@R=.8pc@C=.5pc{&&&&&&&& & \scriptstyle{b} \ar[rd]^{\gamma_2}  & &&& \\
 \scriptstyle{1} \ar[rr]_{\a_1}& & \scriptstyle{2}  \ar[rr]_{\a_2} &&   \scriptstyle{3}  \ar@{.}[rr]& &  \scriptstyle{n}  \ar[rr]_{\a_{n}}  && \scriptstyle{a} \ar[ru]^{\gamma_1}  &&  \scriptstyle{c} \ar[ll]^{\gamma_3} \ar[rr]_{\delta_1}  &&  \scriptstyle{n+1}    \ar@{.}[rr] &&  \scriptstyle{n+m-1} \ar[rr]_{\delta_m} && \scriptstyle{n+m} } $$
 Then  $\Fr\A= 2 + \ell_{\rightarrow b}\ell_{\leftarrow b}$.
\end{coro}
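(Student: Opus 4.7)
The plan is to mimic the proof of Lemma~\ref{lineal-freecycle} and to track only the new constraints introduced by the linear tail $c\to n+1\to\cdots\to n+m$.

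First I would compute $\D(e_i)$ for $i=1,\dots,n,a$ verbatim as in Lemma~\ref{lineal-freecycle}, since the new arrows $\d_i$ do not compose nontrivially with any path supported on the subquiver on the vertices $\{1,\dots,n,a\}$. This produces a single scalar $F_1$ (the lineal contribution of the basis path $1\rightsquigarrow b$) with $\D(e_i)=F_1\,\a_i\cdots\a_n\c_1\otimes\a_1\cdots\a_{i-1}$ for $1\le i\le n$ and $\D(e_a)=F_1\,\c_1\otimes\a_1\cdots\a_n$.

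Next I would analyze $\D(e_c)$. The admissible first tensor factors (paths out of $c$) are $e_c,\c_3,\d_1\cdots\d_i$ and the admissible second factors (paths into $c$, using $\c_1\c_2=0$) are $e_c,\c_2$. Compatibility with the arrow $\c_3$ together with the formula for $\D(e_a)$ above kills every term with second factor $e_c$, because $\c_3\c_1=0$. The genuinely new step is the compatibility equation $\D(e_c)(1\otimes\d_1)=(\d_1\otimes 1)\D(e_{n+1})$: the right-hand side has first tensor factors all starting with $\d_1$, so the left-hand terms with first factor $e_c$ or $\c_3$ must vanish. This kills both the scalar $B$ (the coefficient of $e_c\otimes\c_2$) and the scalar $E$ (the coefficient of $\c_3\otimes\c_2$) that were free in Lemma~\ref{lineal-freecycle}, leaving $\D(e_c)=\sum_{i=1}^{m}G_{6,i}\,\d_1\cdots\d_i\otimes\c_2$.

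I would then propagate the surviving $G_{6,i}$ along the new tail by the compatibility equations at $\d_2,\dots,\d_m$, in the style of the lineal-tail lemma of~\cite{AGL}. The resulting recursion shows that only one of the $G_{6,i}$ is free, namely $F_2:=G_{6,m}$, which is precisely the lineal scalar associated to the basis path $b\rightsquigarrow n+m$ and also forces the term $F_2\,\c_2\d_1\cdots\d_m\otimes e_b$ to appear in $\D(e_b)$. The remaining analysis of $\D(e_b)$ is then identical to Lemma~\ref{lineal-freecycle}: besides the lineal contributions fixed by $F_1$ and $F_2$, the only free parameters are the ``special vertex'' coefficients $D_{i,j}\,\c_2\d_1\cdots\d_j\otimes\a_i\cdots\a_n\c_1$, which are mutually independent and number $\ell_{\rightarrow b}\cdot\ell_{\leftarrow b}$. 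Summing yields $\Fr\A=2+\ell_{\rightarrow b}\ell_{\leftarrow b}$. The hard part is the bookkeeping: one must check simultaneously that the single new arrow $\d_1$ kills both of the scalars $B$ and $E$ of Lemma~\ref{lineal-freecycle}, that a new lineal scalar $F_2$ is created further down the tail, and that the special-vertex count at $b$ grows from $\ell_{\rightarrow b}\cdot 1$ to $\ell_{\rightarrow b}\cdot(m+1)$ accordingly.
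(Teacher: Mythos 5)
Your proposal is correct and follows essentially the same route as the paper, which simply asserts that ``analysis similar to the proof of Lemma~\ref{lineal-freecycle}'' yields the displayed coproducts and then counts the free scalars $F$, $B$ and $D_{\mu,\eta}$; your two key observations --- that the constraint at $\delta_1$ kills the old coefficients of $e_c\otimes\gamma_2$ and $\gamma_3\otimes\gamma_2$ while the propagation down the new tail leaves exactly one new lineal scalar, and that the special-vertex count at $b$ becomes $(n+1)(m+1)=\ell_{\rightarrow b}\ell_{\leftarrow b}$ --- are precisely the content the paper leaves implicit. Your write-up is in fact more detailed than the paper's proof on these points.
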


\begin{proof}
Analysis similar to that in the proof of lemma \ref{lineal-freecycle}  shows that
\begin{eqnarray*}
\D(e_1) & = & F \a_1\cdots\a_n\c_1\otimes e_1\\
\D(e_i) & = & F \a_i\cdots\a_n\c_1\otimes \a_1\cdots \a_{i-1}  \text{; \ \ for  }  i=2,\cdots ,n. \\
\D(e_a) &=& F \c_1\otimes \a_1\cdots\a_n \\
\D(e_b) &=& B \c_2 \d_1\cdots \d_m \otimes e_b + F  e_b\otimes\a_1\cdots\a_n\c_1 +  \displaystyle{\sum_{\mu / t(\mu)=a \ \ }} \displaystyle{\sum_{\eta / s(\eta)=c} D_{\mu,\eta}  \c_2 \eta \otimes \mu\c_1} ;  \text{  \ \ with $\mu,\eta\neq \c_3 $ }\\
\D(e_c) &=& B \d_1\cdots \d_m \otimes \c_2  \\
\D(e_{n+j}) & = & B d_{j+1}\cdots \d_{m} \otimes  c_2\d_1\cdots\d_j \text{;  \ \ for  }  j=1,\cdots ,m-1. \\
\D(e_{n+m}) & = & B \c_2\d_1\cdots\d_{m}\otimes  e_{n+m}
\end{eqnarray*}
 Then we have  a linear structure for both basis path  $\a_1\cdots\a_n\c_1$ and  $\c_2 \d_1\cdots \d_m $ (associated to the scalars $F$ and $B$ respectively). The special vertex $b$ add $\sharp \{ D_{\mu,\eta}\}= \ell_{\rightarrow b}\ell_{\leftarrow b}$.
\end{proof}

\begin{lema}\label{caso-particular-freecycle-lineal-freecycle}
Let $\A$ be the algebra given by the bound quiver

$$\SelectTips{eu}{10}\xymatrix@R=.8pc@C=.5pc{&&&&&&& &&\scriptstyle{b} \ar[rd]^{\gamma_2}  & &&& \\
 \scriptstyle{1} \ar[rr]^{\a_1}& & \scriptstyle{2}   \ar[dl]^{\gamma} \ar[rr]^{\a_2} &&   \scriptstyle{3}\ar@{.}[rr] & &  \scriptstyle{n}  \ar[rr]^{\a_{n}}  && \scriptstyle{a} \ar[ru]^{\gamma_1}  &&  \scriptstyle{c} \ar[ll]^{\gamma_3} \\
 & \scriptstyle{d} \ar[lu]^{\b}  & &&&&&& }$$

Then $\Fr\A=3+\ell_{\rightarrow b}\ell_{\leftarrow b}+\ell_{\rightarrow 1}\ell_{\leftarrow 1}+\ell_{\rightarrow c}\ell_{\leftarrow c}+\ell_{\rightarrow d}\ell_{\leftarrow d}$.
\end{lema}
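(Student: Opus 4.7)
The strategy is to mimic the bookkeeping of Lemma \ref{lineal-freecycle} and Corollary \ref{lineal-freecycle-lineal} on the present quiver, which consists of two saturated $3$-cycles $(a,b,c)$ and $(1,2,d)$ joined by the linear chain $1\xrightarrow{\alpha_1}2\xrightarrow{\alpha_2}\cdots\xrightarrow{\alpha_n}a$. Every valency-two vertex is special, so $\mathcal{V_S}=\{1,b,c,d\}$, and the nonzero paths whose source and target are both special are $\beta:d\rightsquigarrow 1$, $\alpha_1\cdots\alpha_n\gamma_1:1\rightsquigarrow b$ and $\gamma_2:b\rightsquigarrow c$; hence $\sharp\mathcal{B}=3$.

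First I would write, for each vertex $v$, the most general form $\D(e_v)=\sum G^{v}_{\rho,\eta}\,\rho\otimes\eta$ with $s(\rho)=t(\eta)=v$, keeping only nonzero paths of $\A$, and then apply the bimodule identity $\D(\alpha)=(\alpha\otimes 1)\D(e_v)=\D(e_u)(1\otimes\alpha)$ for every arrow $\alpha\colon u\to v$, together with $\D(pq)=0$ whenever $pq\in I$. The saturation relations $\gamma_1\gamma_2=\gamma_2\gamma_3=\gamma_3\gamma_1=0$ and $\alpha_1\gamma=\gamma\beta=\beta\alpha_1=0$ eliminate most parameters: no nonzero piece of $\D(e_i)$ for $2\leq i\leq n$ or of $\D(e_a)$ can pass through $d$ or $c$. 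Exactly as in Lemma \ref{lineal-freecycle}, the free scalar acting on $\D(e_2),\ldots,\D(e_a)$ collapses to a single common coefficient $F$; the same $F$ also appears in $\D(e_1)$ and $\D(e_b)$ and realizes the lineal structure along the basis path $\alpha_1\cdots\alpha_n\gamma_1$.

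By analogous arguments applied to the two short basis paths, independent lineal scalars $B$ (for $\gamma_2$) and $B'$ (for $\beta$) survive and govern the coproducts on the vertex pairs $(b,c)$ and $(d,1)$ respectively. These three scalars contribute $\sharp\mathcal{B}=3$ to $\Fr\A$. Separately, at each special vertex $v\in\mathcal{V_S}$ the remaining indeterminates take the form $D^{v}_{\mu,\eta}\,\mu\otimes\eta$, where $\mu$ runs over paths ending at $v$ and $\eta$ over paths starting at $v$, both transverse to the basis-path direction at $v$, exactly as in Lemma \ref{lineal-freecycle}. Their number is $\ell_{\rightarrow v}\cdot\ell_{\leftarrow v}$, yielding the four corresponding terms in the sum.

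The main obstacle I expect is the bookkeeping at the bridge vertex $1$, which lies at the intersection of the $3$-cycle $(1,2,d)$ and the long linear path: $\D(e_1)$ must simultaneously accommodate the lineal contribution of $F$, the lineal contribution of $B'$, and the special block $D^{1}_{\mu,\eta}$. One must verify that the constraints $\beta\alpha_1=0$ and $\alpha_1\gamma=0$ produce no hidden dependency between these three families; the analogous check at $b$ is essentially carried out in Lemma \ref{lineal-freecycle}. Once this compatibility is verified, summing the independent parameters gives $\Fr\A=3+\ell_{\rightarrow b}\ell_{\leftarrow b}+\ell_{\rightarrow 1}\ell_{\leftarrow 1}+\ell_{\rightarrow c}\ell_{\leftarrow c}+\ell_{\rightarrow d}\ell_{\leftarrow d}$.
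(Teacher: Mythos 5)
Your strategy is the paper's: reduce to Lemma \ref{lineal-freecycle} and Corollary \ref{lineal-freecycle-lineal} for the right-hand cycle and the chain, identify the three lineal scalars (your $F$, $B$, $B'$, the paper's $F$, $B$, $H$) attached to the basis paths $\alpha_1\cdots\alpha_n\gamma_1$, $\gamma_2$, $\beta$, and count $\ell_{\rightarrow v}\ell_{\leftarrow v}$ extra parameters at each of the four special vertices $1,b,c,d$. Your identification of $\mathcal{V_S}$ and $\mathcal{B}$ is correct, and the final count $3+(n+1)+(n+1)+1+1=5+2(n+1)$ agrees with the paper's.

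The issue is that you explicitly defer the one computation that constitutes the proof: you write that one ``must verify'' that no hidden dependency arises at the bridge vertex $1$, and conclude ``once this compatibility is verified.'' That verification is not a formality here --- it is where the lemma lives, and the paper carries it out concretely. Specifically: (i) comparing $\D(e_2\a_2)=\D(\a_2 e_3)$ forces $\D(e_2)=F\,\a_2\cdots\a_n\c_1\otimes\a_1$, so the non-special valency-three vertex $2$ acquires no new terms (in particular no term involving $\c$), a point your sketch does not address; (ii) comparing $\D(e_1)$ with $\D(e_b)$ along $\a_1\cdots\a_n\c_1$ pins down $\D(e_1)=F\,\a_1\cdots\a_n\c_1\otimes e_1+\sum_i D'_i\,\a_1\cdots\a_i\otimes\b+I\,\a_1\cdots\a_n\c_1\otimes\b+H\,e_1\otimes\b$; (iii) comparing the generic $\D(e_d)=J\b\otimes\c+K\b\otimes e_d+L\,e_d\otimes\c$ with $\D(e_2)$ via $\c$ kills $L$, and with $\D(e_1)$ via $\b$ identifies $K=H$, so the lineal scalar for $\b$ is a single parameter shared by $d$ and $1$ rather than two. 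Without step (iii) your count of three independent lineal structures is an assertion, not a conclusion. A small secondary point: your description of the special-vertex block as tensors of paths ``transverse to the basis-path direction'' is inaccurate --- at $v=1$ the surviving extra terms are $\a_1\cdots\a_i\otimes\b$ and $\a_1\cdots\a_n\c_1\otimes\b$, whose left factors run along the basis path; the correct characterization is simply all tensors of a nontrivial path out of $v$ with a nontrivial path into $v$, which is why the count is $\ell_{\rightarrow v}\ell_{\leftarrow v}$ in this gentle setting.
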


%Then $\Fr\A=2n+7$.

\begin{proof}
First observe that if we delete the arrows $\b$ and $\c$ we get an algebra like the lemma above. Clearly $\D$ only can change for the vertices $e_1$ and $e_2$. Then

$$\begin{array}{lcl}
\D(e_i)&=&F \a_i\cdots\a_n\c_1\otimes \a_1\cdots \a_{i-1}, \text{ for } i=3,\cdots ,n.\\
\D(e_a)&=&F \c_1\otimes \a_1\cdots\a_n\\
\D(e_b)&=& B \c_2\otimes e_b + F  e_b\otimes\a_1\cdots\a_n\c_1 +\sum_{i=1}^n D_i  \c_2\otimes\a_i\cdots\a_n\c_1+E\c_2\otimes \c_1\\
\D(e_c)&=&B e_c\otimes \c_2 +D \c_3\otimes \c_2\\
\end{array}$$

The equality $\D(e_2\a_2)=\D(\a_2e_3)=F \a_2\a_3\cdots\a_n\c_1\otimes \a_1\a_2$ implies $\D(e_2)=F \a_2\cdots\a_n\c_1\otimes \a_1$.
Similarly, comparing $\D(e_1)$ and $\D(e_b)$ using the path $\a_1\cdots\a_n\c_1$ we obtain that

$$\D(e_1)=F \a_1\cdots\a_n\c_1\otimes e_1+\sum_{i=1}^n D'_i \a_1\cdots\a_i \otimes\b + I\a_1\cdots\a_n\c_1\otimes \b +H e_1\otimes \b .$$

A general expression for  $\D(e_d)= J\b\otimes \c +K \b\otimes e_d +L e_d\otimes\c$. Comparing with $\D(e_2)$ using the arrow $\c$ we obtain $L=0$ and comparing with $\D(e_1)$ using the arrow $\b$ we get $H=K$. Finally  $\D(e_d)= J\b\otimes \c +H \b\otimes e_d$. \
 We conclude that $\Fr\A=5+2(n+1) = 3+\ell_{\rightarrow b}\ell_{\leftarrow b}+\ell_{\rightarrow 1}\ell_{\leftarrow 1}+\ell_{\rightarrow c}\ell_{\leftarrow c}+\ell_{\rightarrow d}\ell_{\leftarrow d}$.
\end{proof}

\begin{coro}
Let $\A$ be an algebra as above such that  the tail $\mathcal{T}$: $\SelectTips{eu}{10}\xymatrix@R=.4pc@C=.4pc{
  \scriptstyle{2}  \ar@{-}[rr]^{\a_2} &&   \scriptstyle{3}  \ar@{.}[rr]& &  \scriptstyle{n}  \ar@{-}[rr]^{\a_{n}}  && \scriptstyle{a}}$ has a vertex $3\leq i\leq n$  which is a source or a sink. Then $\D_{\mid _{\mathcal{T}}}\equiv 0$ and $\Fr\A=\ell_{\rightarrow b}+\ell_{\leftarrow 1}+4$.
\end{coro}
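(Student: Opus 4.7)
The plan is to redo the bookkeeping of Lemma~\ref{caso-particular-freecycle-lineal-freecycle} after imposing the vanishing on the tail that the interior source or sink forces, exactly as the previous corollary did for the single-3-cycle case. First, apply Lemma~\ref{tail-con-vertice-de-valencia-dos} to the tail $\mathcal{T}$: because $\mathcal{T}$ contains a source or sink of valency two at some $3\le i\le n$, one obtains $\D(e_v)=0$ for every $v\in\mathcal{T}_0=\{2,\dots,n,a\}$, which proves the first assertion $\D_{|\mathcal{T}}\equiv 0$. The bimodule identities $\D(xy)=x\cdot\D(y)=\D(x)\cdot y$ then propagate the vanishing to the connecting arrows: $\D(\a_1)=\a_1\cdot\D(e_2)=0$, $\D(\c_1)=\D(e_a)\cdot\c_1=0$, and $\D(\c_3)=\c_3\cdot\D(e_a)=0$.

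Next I would recompute $\D$ on the four remaining vertices $b,\,1,\,c,\,d$ using these vanishings. Writing $\D(e_b)=\sum G_{x,y}\,x\otimes y$ with $x\in\{e_b,\c_2\}$, the constraint $\c_1\cdot\D(e_b)=\D(\c_1)=0$ forces $G_{e_b,y}=0$ for every $y$ (because $\c_1 e_b=\c_1\neq 0$ while $\c_1\c_2=0$). The surviving $y$'s ending at $b$ are $e_b$, $\c_1$, and the truncated tail-paths $\a_j\cdots\a_n\c_1$ whose range of $j$ is cut off by the source/sink, producing $\ell_{\rightarrow b}+1$ scalars in total, one of which (the coefficient of $\c_2\otimes e_b$) will end up identified with a basis-path scalar via the bimodule identity on $\c_2$. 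A symmetric analysis using $\D(e_1)\cdot\a_1=\D(\a_1)=0$ gives $\D(e_1)=H\,e_1\otimes\b+\sum_{j=1}^{\ell_{\leftarrow 1}} D'_j\,\a_1\cdots\a_j\otimes\b$, i.e.\ $\ell_{\leftarrow 1}+1$ scalars with $H$ identified with the basis-path scalar attached to $\b$. On each 3-cycle the argument of Lemma~\ref{caso-particular-freecycle-lineal-freecycle} restricts cleanly: the identities $\D(e_c)\cdot\c_3=\D(\c_3)=0$ and $\c_2\cdot\D(e_c)=\D(e_b)\cdot\c_2$ leave $\D(e_c)=B\,e_c\otimes\c_2+D\,\c_3\otimes\c_2$, and an analogous argument gives $\D(e_d)=H\,\b\otimes e_d+D''\,\b\otimes\c$, each contributing exactly one new scalar.

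Summing the independent parameters---the two basis-path coefficients $B$ and $H$, the $\ell_{\rightarrow b}$ special-vertex coefficients at $b$, the $\ell_{\leftarrow 1}$ at $1$, and one each at $c$ and $d$---yields $\Fr\A=4+\ell_{\rightarrow b}+\ell_{\leftarrow 1}$. The main obstacle is the careful bookkeeping of the middle step: one must verify that the couplings $B=G_{\c_2,e_b}$ and $H=C$ (where $C$ is the coefficient of $\b\otimes e_d$ in $\D(e_d)$) are genuinely forced by the bimodule identities on $\c_2$ and $\b$, so that each 3-cycle contributes exactly one basis-path scalar rather than two, and that paths such as $\c_3\c_1$ which formally end at $b$ but vanish by saturation do not slip in as spurious parameters.
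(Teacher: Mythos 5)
Your proposal is correct and follows essentially the same route as the paper, which merely invokes ``analysis similar to'' the earlier corollary and then records the surviving forms of $\D(e_b)$ and $\D(e_1)$; your parameter count ($1+\ell_{\rightarrow b}$ at $b$, $1+\ell_{\leftarrow 1}$ at $1$, one new scalar each at $c$ and $d$) matches the paper's exactly. The only cosmetic difference is that you obtain $\D(e_a)=0$ by applying Lemma \ref{tail-con-vertice-de-valencia-dos} to the whole of $\mathcal{T}$, whereas the paper's template (the proof of Corollary \ref{tail con fuente/pozo + ciclo}) treats the endpoint $a$ separately via $0=\D(\a_n)=\D(\a_n e_a)$ --- an immaterial distinction.
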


\begin{proof}
Analysis similar to that in the proof of  corollary \ref{tail con fuente/pozo + ciclo} shows that $\D_{\mid _{\mathcal{T}}}\equiv 0$, $\D(e_c)$ and $\D(e_d)$ do not change  and
\begin{eqnarray*}
\D(e_b) &=& B \c_2\otimes e_b +  \displaystyle{\sum_{\mu / t(\mu)=a} D_{\mu}  \c_2\otimes\mu\c_1}  \text{ \ \ ; $\mu \neq \c_3$}\\
\D(e_1) &=& H e_1\otimes \b + \displaystyle{\sum_{\eta / s(\eta)=2} D_{\eta}  \a_1\eta \otimes\b}    \text{ \ \ ; $\eta \neq \c_3$}
\end{eqnarray*}
which completes the proof.

\end{proof}

In some cases we do not have basis paths and we can find the following situation.

\begin{lema}\label{los-tres-tails-salientes}
Let $\A$ be the algebra given by the bound quiver

$\SelectTips{eu}{10}\xymatrix@R=.8pc@C=.5pc{ \scriptstyle{1} & & \scriptstyle{2} \ar[ll]_{\a_1}  &&   \scriptstyle{3} \ar[ll]_{\a_2} & & \ar@{.}[ll] \scriptstyle{n}  && \scriptstyle{n+1}  \ar[ll]_{\a_{n}} \ar[rr]^{\a_{n+1}}  &&  \scriptstyle{n+2} \ar[dl]^{\b} \ar[rr]^{\a_{n+2}} & & \scriptstyle{n+3}  \ar@{.}[rr] && \scriptstyle{n+m}  \ar[rr]^{\a_{n+m}}  && \scriptstyle{n+m+1} \\
&&&&&&&& & \scriptstyle{c} \ar[lu]^{\gamma} \ar[d]^{\b_2} & &&&&&& \\
&&&&&&&& & \scriptstyle{c_3} \ar@{.}[d] & &&&&&& \\
&&&&&&&& & \scriptstyle{c_t} \ar[d]^{\b_t} & &&&&&& \\
&&&&&&&& & \scriptstyle{c_{t+1}}  & &&&&&&}$

Then $\D\equiv0$ and $\Fr\A=0$.
\end{lema}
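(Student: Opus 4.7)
The plan is to show that any open Frobenius coproduct $\D$ on $\A$ satisfies $\D(e_v)=0$ for every vertex $v$, so that $\D\equiv 0$ and $\Fr\A=0$.

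I would focus first on $\D(e_1)$. Because $1$ is a sink of valency one, the only path with source $1$ is $e_1$, so $\D(e_1)=\sum_\eta C_\eta\, e_1\otimes \eta$ summed over all paths $\eta$ with target $1$. Invoking the saturation relations $\a_{n+1}\b=\b\c=\c\a_{n+1}=0$ of the $3$-cycle, the non-zero paths ending at $1$ reduce to $\{e_1,\a_1,\a_2\a_1,\dots,\a_n\cdots\a_1,\c\a_n\cdots\a_1\}$. Comparing $(\a_n\cdots\a_1\otimes 1)\D(e_1)$ with $\D(e_{n+1})(1\otimes \a_n\cdots\a_1)$, and observing that the only non-zero paths ending at $n+1$ are $e_{n+1}$ and $\c$, matching first tensor factors forces $\D(e_{n+1})=G_1\,\a_n\cdots\a_1\otimes e_{n+1}+G_2\,\a_n\cdots\a_1\otimes\c$ and $\D(e_1)=G_1\,e_1\otimes \a_n\cdots\a_1+G_2\,e_1\otimes \c\a_n\cdots\a_1$ for some scalars $G_1,G_2$.

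Next I would eliminate $G_1$ and $G_2$. Comparing $(\c\otimes 1)\D(e_{n+1})$ with $\D(e_c)(1\otimes\c)$ via $\D(\c)$, and using $\b\c=0$, the term with second factor $e_{n+1}$ has no counterpart on the right-hand side, yielding $G_1=0$; the same comparison identifies the coefficient of $\c\a_n\cdots\a_1\otimes e_c$ in $\D(e_c)$ as equal to $G_2$. To kill $G_2$, I would compare $\D(e_c)(1\otimes\b_2)$ with $(\b_2\otimes 1)\D(e_{c_3})$ via $\D(\b_2)$: the first tensor factor on the right-hand side always begins with $\b_2$, whereas $\c\a_n\cdots\a_1$ begins with $\c$, so matching forces the $\c\a_n\cdots\a_1\otimes e_c$ coefficient to vanish, i.e.\ $G_2=0$. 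Hence $\D(e_1)=0$.

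By entirely analogous arguments using the other two tails, I would obtain $\D(e_{n+m+1})=0$ and $\D(e_{c_{t+1}})=0$: in each case propagation along a tail determines the coproduct at the base vertex of the tail (namely $n+2$ or $c$) up to two parameters, and the bimodule identities around the $3$-cycle, together with the saturation relations, force both parameters to vanish. Finally I would propagate these vanishings back into the quiver: from $\D(e_1)=0$, the identity $\D(e_2)(1\otimes\a_1)=(\a_1\otimes 1)\D(e_1)=0$ combined with $\xi\a_1\neq 0$ for every path $\xi$ ending at $2$ yields $\D(e_2)=0$, and iterating along each tail gives $\D(e_v)=0$ for all intermediate vertices. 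Vanishing at the base vertices of the three tails, together with $\b\c=\a_{n+1}\b=\c\a_{n+1}=0$, then forces $\D(e_{n+1})=\D(e_{n+2})=\D(e_c)=0$. The main obstacle is the careful bookkeeping of which paths survive the saturation relations at each stage; otherwise the argument is a natural elaboration of the techniques from Lemmas \ref{lineal-freecycle} and \ref{caso-particular-freecycle-lineal-freecycle}.
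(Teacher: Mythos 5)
Your argument is correct and follows essentially the same route as the paper's (very terse) proof: propagate the bimodule identity along each outgoing tail to pin down $\D$ at the sinks and at the cycle vertices up to a few scalars, then use the saturation relations $\a_{n+1}\b=\b\c=\c\a_{n+1}=0$ to kill those scalars and propagate the vanishing back through the tails. You only differ in which specific comparisons you use to eliminate the last coefficients (the arrows $\c$ and $\b_2$ rather than $\a_{n+1}$ plus symmetry), and your version is in fact more explicit about killing the $\a_n\cdots\a_1\otimes\c$ term, which the paper leaves implicit.
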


\begin{proof}
%It is a direct computation.
It follows by comparing $\D(e_1)$ with $\D(e_{n+1})$ using the path $\a_n\cdots\a_1$. By symmetry we obtain $\D(e_{n+2})$, $\D(e_{n+m+1})$, $\D(e_c)$ and $\D(e_{c_{t+1}})$. Then,  compare $\D(e_{n+2})$ with $\D(e_{n+1})$ using the arrow $\a_{n+1}$ to find that $\D(e_i)=0$ for all $1\leq i\leq n+m+1$. Finally, conclude that  $\D(e_c)=\D(e_{c_j})=0$ for all $3\leq j\leq t+1$.
\end{proof}

Observe that if $(Q,I)$ is the bound quiver of a cluster tilted algebra of type $\mathbb{A}$ without basis paths and without sources or sinks of valency two, then $(Q,I)$ has  a subquiver as above (or its dual). \\

Now we want to see what happens if we add arrows to a basis path. \\

\begin{defi}
Let $\SelectTips{eu}{10}\xymatrix@R=.8pc@C=.5pc{\scriptstyle{i}\ar[rr]^{\a_i} & &  \scriptstyle{i+1} }$ be an arrow of a quiver  $Q$: $\SelectTips{eu}{10}\xymatrix@R=.8pc@C=.5pc{ \ar@{.}[rr]& & \scriptstyle{i-1} \ar@{-}[rr] & & \scriptstyle{i} \ar[rr]^{\a_i} && \scriptstyle{i+1}  \ar@{-}[rr]  &&   \scriptstyle{i+2} \ar@{.}[rr]  & & }$. We say that $Q'$  is obtained by \emph{adding} to $Q$ \emph{a tail  through the arrow }$\a_i$ if $Q'$ has the form

$$\SelectTips{eu}{10}\xymatrix@R=.8pc@C=.5pc{ \ar@{.}[rr]& & \scriptstyle{i-1} \ar@{-}[rr] & & \scriptstyle{i} \ar[rr]^{\a_i} && \scriptstyle{i+1} \ar[dl]^{\gamma}  \ar@{-}[rr]  &&   \scriptstyle{i+2} \ar@{.}[rr]  & & \\
&&&& & \scriptstyle{d_1} \ar[lu]^{\b} \ar@{-}[d]^{\b_2} & &&&&&& \\
&&&& & \scriptstyle{d_2} \ar@{.}[d] & &&&&&& \\
&&&& & \scriptstyle{d_{t-1}} \ar@{-}[d]^{\b_t} & &&&&&& \\
&&&& & \scriptstyle{d_t}  & &&&&&&}$$

with the cycle $\a_i\c\b$ saturated.
\end{defi}

\begin{lema}\label{delta en el tail da cero}
Let $\A$ be the algebra given by  the quiver $Q$:
$$\SelectTips{eu}{10}\xymatrix@R=.8pc@C=.5pc{ \ar@{.}[rr]& & \scriptstyle{i-1} \ar@{-}[rr] & & \scriptstyle{i} \ar[rr]^{\a_i} && \scriptstyle{i+1} \ar[dl]^{\gamma}  \ar@{-}[rr]  &&   \scriptstyle{i+2} \ar@{.}[rr]  & & \\
&&&& & \scriptstyle{d_1} \ar[lu]^{\b} \ar@{-}[d]^{\b_2} & &&&&&& \\
&&&& & \scriptstyle{d_2} \ar@{.}[d] & &&&&&& \\
&&&& & \scriptstyle{d_{t-1}} \ar@{-}[d]^{\b_t} & &&&&&& \\
&&&& & \scriptstyle{d_t}  & &&&&&&}$$

with $t\geq 2$. If $\D(\b)=\D(\c)=0$, then $\D(e_{d_i})=0$ for all $ i=1,\cdots, t$.
\end{lema}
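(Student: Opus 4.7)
The plan is to first determine the shape of $\Delta(e_{d_1})$ from the bimodule axiom combined with the hypotheses $\Delta(\beta)=\Delta(\gamma)=0$, then force $\Delta(e_{d_1})=0$ by computing $\Delta(\beta_2)$ in two ways, and finally propagate the vanishing along the tail by induction on $j$.

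For the structural step I would write $\Delta(e_{d_1})=\sum G_{\rho,\eta}\,\rho\otimes\eta$ with $s(\rho)=t(\eta)=d_1$. The bimodule identity $(\gamma\otimes 1)\Delta(e_{d_1})=\Delta(\gamma)=0$ forces $\gamma\rho=0$ in every term. Since $\gamma\beta=0$ in the saturated $3$-cycle whereas $\gamma\beta_2\neq 0$ (recall that in a cluster tilted algebra of type $\mathbb{A}$ the only relations are length-$2$ paths inside saturated $3$-cycles, and $\beta_2$ belongs to no $3$-cycle by the valency conditions), the only surviving possibility is $\rho=\beta p$ with $p$ a path starting at $i$. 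Symmetrically, $\Delta(e_{d_1})(1\otimes\beta)=\Delta(\beta)=0$ restricts $\eta=q\gamma$ with $q$ a path ending at $i+1$, so
\[
\Delta(e_{d_1})=\sum_{p,q} G_{p,q}\,\beta p\otimes q\gamma.
\]

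For the vanishing step I compute $\Delta(\beta_2)$ in two different ways. Assuming first that $\beta_2\colon d_1\to d_2$ (the opposite orientation is analogous, with the roles of the two tensor factors interchanged), applying the bimodule axiom from the $d_1$ side yields $\Delta(\beta_2)=\Delta(e_{d_1})(1\otimes\beta_2)=\sum G_{p,q}\,\beta p\otimes q\gamma\beta_2$, whose left tensor factors all begin with the arrow $\beta$; applying it from the $d_2$ side yields $\Delta(\beta_2)=(\beta_2\otimes 1)\Delta(e_{d_2})$, whose left tensor factors all begin with $\beta_2$. Since $\beta\neq\beta_2$ as arrows of $Q$, the two expressions have linearly independent supports in $A\otimes A$ and must vanish separately. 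The remaining basis paths $\beta p\otimes q\gamma\beta_2$ are themselves linearly independent whenever $\beta p\neq 0$ and $q\gamma\neq 0$ (using once more that $\gamma\beta_2\neq 0$), so every $G_{p,q}$ must vanish, yielding $\Delta(e_{d_1})=0$.

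With $\Delta(e_{d_1})=0$ in hand, the bimodule axiom immediately gives $\Delta(\beta_2)=0$. I would then induct on $j\ge 2$: assuming $\Delta(e_{d_{j-1}})=\Delta(\beta_j)=0$, expand $\Delta(e_{d_j})=\sum H_{\rho,\eta}\,\rho\otimes\eta$ and note that the only paths starting or ending at $d_j$ are $e_{d_j}$ and those factoring through the tail arrows $\beta_j$ or $\beta_{j+1}$. Depending on the orientation of $\beta_j$, the constraint $(\beta_j\otimes 1)\Delta(e_{d_j})=0$ or $\Delta(e_{d_j})(1\otimes\beta_j)=0$ combined with the nonvanishing of $\beta_j\beta_{j+1}$ (resp.\ $\beta_{j+1}\beta_j$) in $A$ forces every $H_{\rho,\eta}=0$, hence $\Delta(e_{d_j})=0$; then $\Delta(\beta_{j+1})=0$ follows from the bimodule axiom and the induction continues up to $j=t$. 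The principal obstacle is the vanishing step for $\Delta(e_{d_1})$: everything hinges on the arrow-level distinction between $\beta$ and $\beta_2$, which in turn relies on $\beta_2$ being the ``third arrow'' at the valency-three vertex $d_1$ of a cluster tilted quiver of type $\mathbb{A}$, so that it lies outside every $3$-cycle.
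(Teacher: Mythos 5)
Your proof is correct and rests on the same mechanism as the paper's: the bimodule identity applied to the idempotents $e_{d_j}$, the hypotheses $\D(\b)=\D(\c)=0$, and the fact that the tail arrows $\b_2,\dots,\b_t$ lie outside every $3$-cycle so that multiplication along the tail is injective. The only difference is organizational: the paper first reduces to the linearly oriented tail via Lemma \ref{tail-con-vertice-de-valencia-dos}, kills $\D(e_{d_i})$ for $i\geq 2$ in one stroke using the composite paths $\c\b_2\cdots\b_i$, and only then treats $d_1$ by comparison with the already-known $\D(e_{d_2})=0$, whereas you establish $\D(e_{d_1})=0$ first (via the disjoint-support comparison of the two expressions for $\D(\b_2)$, one supported on paths beginning with $\b$ and the other on paths beginning with $\b_2$) and then propagate outward by induction, handling all orientations of the tail uniformly.
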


\begin{proof}
By \ref{tail-con-vertice-de-valencia-dos} it suffices to consider the case where the tail $\SelectTips{eu}{10}\xymatrix@R=.8pc@C=.5pc{ \scriptstyle{d_1} \ar@{-}[rr]^{\b_2} & & \scriptstyle{d_2} \ar@{.}[rr] & & \scriptstyle{d_{t-1}} \ar@{-}[rr]^{\b_t} && \scriptstyle{t}}$ is linearly oriented. We can assume that there is a path $\b_2\cdots\b_t$ from $d_1$ to $d_t$. Since $\D(\c)=0$ we have  $0=\D(\c\b_2\cdots\b_i)=\D(\c\b_2\cdots\b_i e_{d_i})$  which implies $\D(e_{d_i})=0$ for all $i\neq 1$. For $d_1$ compare  $\D(e_{d_1})$ with $\D(e_{d_2})=0$ using the arrow $\b_2$.

\end{proof}

If an algebra as above has $t=1$ we have the following particular case:
$$\SelectTips{eu}{10}\xymatrix@R=.8pc@C=.5pc{ \ar@{.}[rr]& & \scriptstyle{i-1} \ar@{-}[rr] & & \scriptstyle{i} \ar[rr]^{\a_i} && \scriptstyle{i+1} \ar[dl]^{\gamma}  \ar@{-}[rr]  &&   \scriptstyle{i+2} \ar@{.}[rr]  & & \\
&&&& & \scriptstyle{d_1} \ar[lu]^{\b}  & &&&&&& }$$

\begin{lema}\label{caso particular almost free cycle}
Let $\A$ be the algebra as above with  $\D(\b)=\D(\c)=0$, then $$\D(e_{d_1})= \displaystyle{\sum_{\small{s(\rho)=i}}\sum_{\small{t(\mu)=i+1}} A_{\rho\mu}\b\rho\otimes\mu\c }.$$
In particular, if $\SelectTips{eu}{10}\xymatrix@C=1pc{ \scriptstyle{i-1} \ar[r] &  \scriptstyle{i} \ar[r]^{\a_i} & \scriptstyle{i+1}  \ar[r]  &   \scriptstyle{i+2}  }$ is linearly oriented $\D(e_{d_1})= A \b\otimes\c $.
\end{lema}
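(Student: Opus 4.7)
The plan is to start from the most general Ansatz for $\D(e_{d_1})$ forced by the identity $\D(e_{d_1}) = (e_{d_1}\otimes 1)\,\D(e_{d_1})\,(1\otimes e_{d_1})$, and then use the two bimodule constraints $\D(\b)=0$ and $\D(\c)=0$ to eliminate the unwanted summands. Since $\b$ is the only arrow with source $d_1$ and $\c$ is the only arrow with target $d_1$, every nonzero path starting at $d_1$ is either $e_{d_1}$ or factors as $\b\rho$ with $s(\rho)=i$, and dually every nonzero path ending at $d_1$ is either $e_{d_1}$ or factors as $\mu\c$ with $t(\mu)=i+1$. Thus $\D(e_{d_1})$ breaks into four blocks of coefficients, indexed by $e_{d_1}\otimes e_{d_1}$, $\b\rho\otimes e_{d_1}$, $e_{d_1}\otimes\mu\c$, and $\b\rho\otimes\mu\c$.

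Next I compute $\D(\b) = \D(e_{d_1})\cdot\b$ and $\D(\c) = \c\cdot \D(e_{d_1})$ and set both to zero. The key observation is that the saturation of the $3$-cycle $\a_i\c\b$ yields the relation $\c\b=0$, so right multiplication by $\b$ annihilates every summand whose right factor ends in $\c$, while left multiplication by $\c$ annihilates every summand whose left factor begins with $\b$. In the first case the surviving contribution is a linear combination of linearly independent tensors of the form $p\otimes\b$, which forces the coefficients $A_0$ of the $e_{d_1}\otimes e_{d_1}$ block and $A_\rho$ of the $\b\rho\otimes e_{d_1}$ block to vanish; in the second case the independent tensors $\c\otimes q$ force the coefficient $B_\mu$ of the $e_{d_1}\otimes\mu\c$ block to vanish. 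The only block remaining is $\sum_{\rho,\mu} A_{\rho\mu}\,\b\rho\otimes\mu\c$, which is the claimed formula.

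For the ``in particular'' statement, the linear orientation $i-1\to i\to i+1\to i+2$ means $\a_i$ is the unique arrow out of $i$ and the unique arrow into $i+1$. A path $\rho$ with $s(\rho)=i$ is then either $e_i$ or begins with $\a_i$; the latter contributes zero to $\b\rho$ because $\b\a_i=0$. Symmetrically $\mu\c\neq 0$ forces $\mu=e_{i+1}$. Only $(\rho,\mu)=(e_i,e_{i+1})$ survives, collapsing the double sum to the single term $A\,\b\otimes\c$.

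The argument is essentially bookkeeping once the Ansatz is in place; the one step demanding care is the systematic use of the saturation relations $\a_i\c=\c\b=\b\a_i=0$ to see which tensors survive each bimodule equation, and the invocation of linear independence of the resulting path tensors in $\A\otimes\A$ to conclude that coefficients vanish individually.
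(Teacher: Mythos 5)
Your proof is correct and follows essentially the same route as the paper: write the general ansatz $\D(e_{d_1})=\sum G_{\sigma,\eta}\,\sigma\otimes\eta$ with $s(\sigma)=t(\eta)=d_1$, then apply $0=\D(\c)=(\c\otimes 1)\D(e_{d_1})$ and $0=\D(\b)=\D(e_{d_1})(1\otimes\b)$ together with $\c\b=0$ and linear independence of path tensors to kill every block except $\b\rho\otimes\mu\c$. Your explicit verification of the ``in particular'' case via $\b\a_i=0$ and $\a_i\c=0$ is a detail the paper leaves implicit, but the argument is the same.
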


\begin{proof}
The coproduct $\D$ at the vertex $e_d$ has the general expression  $\D(e_{d_1})= \displaystyle{\sum_{\sigma,\eta} G_{\sigma,\eta} \sigma \otimes \eta}$  with $s(\sigma)=t(\eta)=d_1$. As  $0=\D(\c)=\D(\c e_{d_1})$ and  $\c\b=0$ only the terms with $\sigma=\b\rho$ where $s(\rho)=i$ have a non zero  $G_{\sigma,\eta}$.  Analogously $0=\D(\b)$ implies that only the terms with $\eta=\mu\c$  where $t(\mu)=i+1$ have a non zero  $G_{\sigma,\eta}=A_{\rho\mu}$.
\end{proof}

\begin{obs}\label{vertices especiales suman l>--.l-->}  Since  $\sharp \{A_{\rho\mu} : s(\rho)=i, t(\mu)=i+1\}= \ell_{\rightarrow d_1}\ell_{\leftarrow d_1} $,  every special vertex $d_1$ add  $\ell_{\rightarrow d_1}\ell_{\leftarrow d_1}$ to the Frobenius dimension.
\end{obs}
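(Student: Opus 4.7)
The statement is really a counting observation building on Lemma \ref{caso particular almost free cycle}, so my plan is short: establish the bijection between the indices $(\rho,\mu)$ and pairs of admissible paths through $d_1$, then argue that the corresponding scalars $A_{\rho\mu}$ are genuinely free parameters.

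First I would make precise what ``admissible'' means. By Lemma \ref{caso particular almost free cycle}, the only nonzero coproduct at $d_1$ has the form
\[
\D(e_{d_1}) \;=\; \sum_{\rho,\mu} A_{\rho\mu}\,\b\rho\otimes \mu\c,
\]
where $\rho$ is any path with $s(\rho)=i$ such that $\b\rho\neq 0$ in $\A$, and $\mu$ any path with $t(\mu)=i+1$ such that $\mu\c\neq 0$. Since $\a_i\c\b$ is a saturated $3$-cycle we have $\b\a_i=0$ and $\a_i\c=0$, so admissibility amounts to: $\rho$ does not begin with $\a_i$, and $\mu$ does not end with $\a_i$. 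Note that $\rho=e_i$ and $\mu=e_{i+1}$ are allowed.

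Next I would set up the two bijections that produce the numbers $\ell_{\leftarrow d_1}$ and $\ell_{\rightarrow d_1}$. Because $\b$ is the unique arrow out of $d_1$, every nonzero path of positive length starting at $d_1$ factors uniquely as $\b\rho$ with $\rho$ starting at $i$ and not beginning with $\a_i$; conversely each such $\rho$ yields a nonzero path $\b\rho$ of length $|\rho|+1$ starting at $d_1$. Locally at $d_1$ the quiver outside the $3$-cycle is a linear tail (this is the setting of Lemma \ref{caso particular almost free cycle}), so for each length in $\{1,\dots,\ell_{\leftarrow d_1}\}$ there is exactly one path out of $d_1$, giving
\[
\sharp\{\rho : s(\rho)=i,\ \b\rho\neq 0\} \;=\; \ell_{\leftarrow d_1}.
\]
The symmetric argument applied to $\c$ as the unique arrow into $d_1$ gives $\sharp\{\mu : t(\mu)=i+1,\ \mu\c\neq 0\}=\ell_{\rightarrow d_1}$. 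Multiplying, the index set of the scalars $A_{\rho\mu}$ has cardinality $\ell_{\rightarrow d_1}\ell_{\leftarrow d_1}$.

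Finally I would justify that each such scalar truly adds an independent dimension to the Frobenius space. The tensors $\b\rho\otimes\mu\c$ are linearly independent in $\A\otimes\A$ because the paths $\b\rho$ (respectively $\mu\c$) are distinct basis elements of $\A$. The bimodule equations imposed by $\D$ at the neighbouring vertices only relate $\D(e_{d_1})$ to $\D(\b)$ and $\D(\c)$, both of which vanish by hypothesis; consequently the constraints leave the $A_{\rho\mu}$ totally free. Hence the $\ell_{\rightarrow d_1}\ell_{\leftarrow d_1}$ scalars contribute $\ell_{\rightarrow d_1}\ell_{\leftarrow d_1}$ independent directions to the Frobenius space, proving the remark. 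The only mild subtlety is the linearity/independence argument in this last paragraph, but in the present local picture the absence of further branchings at $d_1$ makes it immediate.
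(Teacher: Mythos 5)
Your argument is correct and is essentially the counting that the paper leaves implicit: the remark is stated without a separate proof as a direct consequence of Lemma \ref{caso particular almost free cycle}, and your identification of the admissible pairs $(\rho,\mu)$ with the nonzero paths through $d_1$, together with the observation that the only bimodule constraints on $\D(e_{d_1})$ factor through $\D(\b)=\D(\c)=0$ and are automatically satisfied, is exactly the intended justification. The one phrase to soften is ``locally a linear tail'': the ambient quiver may still branch beyond $i$ and $i+1$, but gentleness (each arrow admits at most one nonzero continuation, since all relations come from saturated $3$-cycles) still yields exactly $\ell_{\leftarrow d_1}$ admissible $\rho$ and $\ell_{\rightarrow d_1}$ admissible $\mu$, so the count stands.
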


\begin{prop}\label{agregar un Y-tail a un lineal}
Assume that we know $\D$ for the algebra $\A$ given by  the quiver $Q$: $$\SelectTips{eu}{10}\xymatrix@R=.8pc@C=.5pc{ \ar@{.}[rr]& & \scriptstyle{i-1} \ar[rr]^{\a_{i-1}} & & \scriptstyle{i} \ar[rr]^{\a_i} && \scriptstyle{i+1}  \ar[rr]^{\a_{i+1}}  &&   \scriptstyle{i+2} \ar@{.}[rr]  & & }$$
 with $i,i+1$  vertices of valency 2.  Let  $Q'$  be the quiver obtained by adding to $Q$ a tail  through the arrow $\a_i$. Let $\D '$ be a coproduct for the algebra $\A'$ associated to $Q'$.  Then  $\D'(e_v)=0$ for all $v\in Q'_0\setminus Q_0$.  Moreover,  $\Fr\A=\Fr{\A'}$.
\end{prop}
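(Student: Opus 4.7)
The plan is to establish a $\K$-linear bijection between the Frobenius spaces of $\A$ and $\A'$. Given any open Frobenius coproduct $\D'$ on $\A'$, I will (i) show that $\D'(e_{d_j})=0$ for every $j\in\{1,\ldots,t\}$, and then (ii) show that the restriction of $\D'$ to vertices of $Q$ takes values in $\A\otimes\A$ and defines an open Frobenius coproduct $\D$ on $\A$. The reverse direction --- extending any $\D$ on $\A$ to $\A'$ by setting it to zero on the new idempotents and on the arrows $\b,\c,\b_2,\ldots,\b_t$ --- is checked by verifying the bimodule property on the new arrows, which is automatic because the saturation relations $\a_i\c=\c\b=\b\a_i=0$ force both sides of the bimodule condition to vanish.

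The heart of the proof is showing $\D'(\b)=\D'(\c)=0$, because once this is known, Lemma~\ref{delta en el tail da cero} (or Lemma~\ref{caso particular almost free cycle} in the degenerate case $t=1$) applies verbatim to the attached tail and yields $\D'(e_{d_j})=0$ for all $j$. To prove $\D'(\b)=0$, expand $\D'(\b)$ in two ways: as $\D'(e_{d_1})(1\otimes\b)$ and as $(\b\otimes 1)\D'(e_i)$. Because vertex $i$ has valency $2$ in $Q$, its only arrow out in $Q'$ is still $\a_i$; hence any term $\b\sigma\otimes\eta$ coming from $(\b\otimes 1)\D'(e_i)$ with $\sigma$ of positive length is killed by $\b\a_i=0$, leaving $\D'(\b)=\b\otimes\psi$ for some $\psi\in e_{d_1}\A'e_i$. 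Comparing with $\D'(e_{d_1})(1\otimes\b)$, every path ending at $d_1$ that is post-composed by $\b$ either passes through $\c$ (and is killed by $\c\b=0$) or through the linear tail $\b_2,\ldots,\b_t$ (which, in conjunction with the arrows of $Q$ that do not pass through $i$, cannot produce any surviving contribution because a path from $Q_0$ to $d_1$ must enter the tail through $\c$). This forces $\psi=0$, hence $\D'(\b)=0$. The argument for $\D'(\c)=0$ is formally dual, using the valency-$2$ hypothesis at $i+1$ and the relations $\a_i\c=\c\b=0$.

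With $\D'(e_{d_j})=0$ for $1\le j\le t$ in hand, I would finally show that $\D'(e_v)$ for $v\in Q_0$ lies in $\A\otimes\A$: any hypothetical term of $\D'(e_v)$ containing a factor through $\b$, $\c$, or some $d_j$ can be isolated by multiplying on the appropriate side by $e_{d_j}$ and by idempotents in $Q_0$, which by the already-established vanishings must give zero. Consequently $\D'|_{Q_0}$ descends to a coproduct $\D$ on $\A$ satisfying the bimodule axiom, and the assignment $\D'\mapsto\D$ is $\K$-linear and inverse to the zero-extension map. This yields $\Fr\A=\Fr{\A'}$.

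The main obstacle is the vanishing of $\D'(\b)$ and $\D'(\c)$: the book-keeping of surviving tensor terms is subtle because the tail of length $t$ admits many auxiliary paths through $\b_2,\ldots,\b_t$ that could a priori contribute. The valency-$2$ hypothesis on $i$ and $i+1$ in $Q$ together with the three saturation relations in the cycle $\a_i\c\b$ are exactly the constraints needed to rule out every such contribution, and the careful combinatorics of which paths survive is what makes this step the delicate part of the argument.
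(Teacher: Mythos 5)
Your overall skeleton matches the paper's (reduce everything to $\D'(\b)=\D'(\c)=0$, invoke Lemma~\ref{delta en el tail da cero}, then argue that $\D'$ restricted to $Q_0$ coincides with a coproduct for $\A$), but two steps are genuinely broken. First, the ``degenerate case $t=1$'': Lemma~\ref{caso particular almost free cycle} does \emph{not} yield $\D'(e_{d_1})=0$; it yields $\D'(e_{d_1})=\sum A_{\rho\mu}\,\b\rho\otimes\mu\c$, which is nonzero in general, and by Remark~\ref{vertices especiales suman l>--.l-->} such a special vertex contributes $\ell_{\rightarrow d_1}\ell_{\leftarrow d_1}$ to the Frobenius dimension, so in that situation $\Fr{\A'}>\Fr\A$ and the conclusion of the proposition is simply false. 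The statement only concerns genuine tails, which by Definition 2.1 have at least two vertices, i.e.\ $t\geq 2$; your inclusion of $t=1$ asserts a vanishing that the cited lemma explicitly contradicts.

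Second, your derivation of $\D'(\b)=0$ does not close. The reduction $(\b\otimes 1)\D'(e_i)=\b\otimes\psi$ is fine (and note $\psi$ lies in $\A'e_i$, not $e_{d_1}\A'e_i$), but comparing with $\D'(e_{d_1})(1\otimes\b)$ does not force $\psi=0$: a path $\tau$ ending at $d_1$ need not pass through $\c$ --- it can originate at $d_2,\dots,d_t$ inside the attached tail --- and for such $\tau$ the product $\tau\b$ is not killed by any relation, since $\c\b=0$ is the only relation ending in $\b$. Your parenthetical justification covers only paths starting in $Q_0$, so terms of the form $\b\otimes\tau\b$ survive your comparison. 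The paper's mechanism is different: it compares $\D'(e_i)(1\otimes\a_i)$ with $(\a_i\otimes 1)\D'(e_{i+1})$ and uses the valency-two hypothesis at $i$ and $i+1$ to pin down the \emph{shape} of $\D'(e_{i+1})$ (every second tensor factor ends in $\a_i$) and dually of $\D'(e_i)$; the vanishing of $\D'(\c)$ and $\D'(\b)$ then falls out of $\a_i\c=0$ and $\b\a_i=0$. Likewise, your final step (that $\D'(e_i)$ and $\D'(e_{i+1})$ carry no extra terms factoring through the new vertices) is not achieved by ``multiplying by idempotents,'' which cannot isolate a second factor ending in $\b$; the paper kills those terms by comparing $(\a_{i-1}\otimes 1)\D'(e_i)$ with $\D'(e_{i-1})(1\otimes\a_{i-1})=\D(\a_{i-1})$ through the neighbouring vertex $i-1$.
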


\begin{proof}
If we prove that $\D'(\c)=\D'(\b)=0$, the first assertion follows from lemma \ref{delta en el tail da cero}. We have $\D'(e_{i+1})= \displaystyle{\sum_{\sigma,\eta} G_{\sigma,\eta} \sigma \otimes \eta}$ with $s(\sigma)=t(\eta)=i+1$. Since the vertex $i+1$ has valency two in $Q$, we have  $\eta=\eta'\a_i$ with $t(\eta ')=i$ or $\eta=e_{i+1}$. We claim that every $G_{\sigma,e_{i+1}}$ has to be zero. Indeed, $\D'(\a_i)=\displaystyle{\sum_{\sigma} G_{\sigma,e_{i+1}} \a_i\sigma \otimes e_{i+1}}+\displaystyle{\sum_{\sigma,\eta '} G_{\sigma,\eta'} \a_i\sigma \otimes \eta'\a_i}$ \hspace{2mm}  but \hspace{2mm} $\D'(\a_i)=\D'(e_i \a_i)=\displaystyle{\sum_{\mu,\varepsilon} G_{\mu,\varepsilon} \mu \otimes \varepsilon}\a_i$   with   $s(\mu)=t(\varepsilon)=i$,    which do not have any term $\a_i\sigma \otimes e_{i+1}$. Thus, $\D'(e_{i+1})= \displaystyle{\sum_{\sigma,\eta'} G_{\sigma,\eta'} \sigma \otimes \eta '\a_i}$ and, in consequence, $\D'(\c)=0$. Similarly we prove that $\D'(\b)=0$.\\

Now, we want to show that $\D(e_v)=\D'(e_v)$ for all $v\in Q_0$. Clearly, since $\b\a_i=\a_i\c=0$,  $\D(e_v)=\D'(e_v)$ for all $v\in Q_0 \setminus \{i,i+1\}$. For $v=i$ observe that $\D'(e_i)=\D(e_i)+ \displaystyle{\sum_{\sigma,\eta} G_{\sigma,\eta} \sigma \otimes \eta\b}$ with $s(\sigma)=i$ and $t(\eta)=s(\b)$.  Then,

\begin{eqnarray*}
\D(\a_{i-1}) & = &  \D(e_{i-1})(1\otimes \a_{i-1}) \hspace{2mm}   =  \hspace{2mm}  \D'(e_{i-1})(1\otimes \a_{i-1})    \\
& =&  \D'(\a_{i-1}) \hspace{2mm}  =  \hspace{2mm}  (\a_{i-1}\otimes 1)\D'(e_i) \\
& = & \D(\a_{i-1}) + \displaystyle{\sum_{\sigma,\eta} G_{\sigma,\eta} \a_{i-1}\sigma \otimes \eta\b}
\end{eqnarray*}

which implies that every scalar $ G_{\sigma,\eta}=0$ and consequently $\D'(e_i)=\D(e_i)$. Similar arguments applies for $v=i+1$. Finally,
$\Fr\A=\Fr{\A'}$.
\end{proof}

Recall that a \emph{gluing} of two quivers with relations $Q$ and $Q'$ with respect to a vertex  $v$ in $Q$ and $v'$ in $Q'$ is obtained
by taking the disjoint union $Q \sqcup Q'$ and identifying the vertices $v$ and $v'$. The relations are induced from those of $Q$ and $Q'$ (i.e. there are no additional relations).

The following lemma show what happens if we glue a saturated cycle at the ' final'  vertex of a tail.

\begin{lema}\label{lineal-freecycle}
Let $\A$ be the algebra given by the bound quiver

$$\SelectTips{eu}{10}\xymatrix@R=.5pc@C=.5pc{&&&&&&&&&&&&&&& \\
\scriptstyle{a+1} \ar@{.}[u] \ar[rd]^{\c}  & &&&&&&&& & \scriptstyle{m+1} \ar[rd]^{\varepsilon}  & &&& \\
 & \scriptstyle{1} \ar[dl]^{\b} \ar@{-}[rr]^{\b_2}& & \scriptstyle{2}  \ar@{-}[rr]^{\b_3} &&    \ar@{.}[rr]& &   \ar[rr]  && \scriptstyle{m} \ar[ru]^{\b_{m+1}}  &&  \scriptstyle{d} \ar[ll]^{\mu} \\
\scriptstyle{a}\ar[uu]^{\a}  \ar@{.}[d] &&&&&&&&&&&&&&&&&&\\
&&&&&&&&&&&&&&&&&&&}$$

Assume that $\D(\b)=\D(\c)=0$, then  $\D(e_i)=0$ for all $ i=1,\cdots, m$ ;  $\D(e_{m+1})  =  \displaystyle{\sum_{\eta / t(\eta)=m+1} G_{\eta} \varepsilon \otimes \eta  } $ and  \
$\D(e_d)=  J \mu\otimes \varepsilon+ G_{e_{m+1}} e_d\otimes\varepsilon$.

\end{lema}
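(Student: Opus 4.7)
The plan is to first annihilate $\Delta$ on vertices $1,\dots,m$ by a two-step argument (first kill $\Delta(e_1)$ using the hypothesis plus an adjacent chain arrow, then propagate along the chain), and then read off the stated formulas for $\Delta(e_{m+1})$ and $\Delta(e_d)$ using only the saturated $3$-cycle at $m$.

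\textbf{Phase 1: } I would first show that $\Delta(e_1)=0$. Writing $\Delta(e_1)=\sum G_{\sigma,\eta}\,\sigma\otimes\eta$ with $s(\sigma)=t(\eta)=1$, the identities $\Delta(\beta)=\Delta(e_1)(1\otimes\beta)=0$ and $\Delta(\gamma)=(\gamma\otimes 1)\Delta(e_1)=0$, together with the saturation relations $\gamma\beta=\beta\alpha=\alpha\gamma=0$, force every surviving $\sigma$ to start with $\beta$ and every surviving $\eta$ to end with $\gamma$. Since $\beta\alpha=0$, one has $\sigma=\beta$; hence $\Delta(e_1)=\sum_{\eta'}G_{\eta'}\,\beta\otimes\eta'\gamma$ with $t(\eta')=a+1$. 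Now I use the adjacent chain arrow $\beta_2$ between $1$ and $2$ (here I treat the orientation $\beta_2:1\to 2$; the other orientation is dual). Computing $\Delta(\beta_2)$ both as $\Delta(e_1)(1\otimes\beta_2)=\sum G_{\eta'}\,\beta\otimes\eta'\gamma\beta_2$ and as $(\beta_2\otimes 1)\Delta(e_2)=\sum G''_{\sigma,\eta}\,\beta_2\sigma\otimes\eta$, I compare first tensor factors: the left-hand terms all begin with $\beta$, the right-hand ones with $\beta_2$; since these give linearly independent elements in $\A\otimes\A$, every $G_{\eta'}$ must vanish, proving $\Delta(e_1)=0$.

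\textbf{Phase 2: } Next I induct along the chain. If $\Delta(e_{i-1})=0$, then $\Delta(\beta_i)=\Delta(e_{i-1})(1\otimes\beta_i)=0$, hence $(\beta_i\otimes 1)\Delta(e_i)=0$. Because for $2\le i\le m-1$ the vertex $i$ lies in no $3$-cycle, there are no relations involving $\beta_i$ with any arrow leaving $i$, so $\beta_i\sigma\neq 0$ for every $\sigma$ with $s(\sigma)=i$; thus $\Delta(e_i)=0$. For $i=m$ the only candidates are $\sigma\in\{e_m,\beta_{m+1}\}$ (since $\beta_{m+1}\varepsilon=0$), and $\beta_m\beta_{m+1}\neq 0$ since the $3$-cycle at $m$ involves $\beta_{m+1},\varepsilon,\mu$ but not $\beta_m$; hence $\Delta(e_m)=0$.

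\textbf{Phase 3: } From $\Delta(e_m)=0$ I get $\Delta(\beta_{m+1})=0$, so $(\beta_{m+1}\otimes 1)\Delta(e_{m+1})=0$. In $\Delta(e_{m+1})=\sum G_{\sigma,\eta}\,\sigma\otimes\eta$ the only paths starting at $m+1$ are $e_{m+1}$ and $\varepsilon$; since $\beta_{m+1}e_{m+1}=\beta_{m+1}\neq 0$ while $\beta_{m+1}\varepsilon=0$, only $\sigma=\varepsilon$ survives, giving $\Delta(e_{m+1})=\sum_\eta G_\eta\,\varepsilon\otimes\eta$ as stated. Finally, for $\Delta(e_d)$ the paths at $d$ allowed by the cycle relations reduce it to $\Delta(e_d)=G_1 e_d\otimes e_d+G_2 e_d\otimes\varepsilon+G_3\mu\otimes e_d+J\mu\otimes\varepsilon$. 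Matching $(\varepsilon\otimes 1)\Delta(e_d)$ with $\Delta(e_{m+1})(1\otimes\varepsilon)=G_{e_{m+1}}\varepsilon\otimes\varepsilon$ gives $G_1=0$ and $G_2=G_{e_{m+1}}$; matching $\Delta(e_d)(1\otimes\mu)$ with $(\mu\otimes 1)\Delta(e_m)=0$ gives $G_3=0$, leaving the stated form.

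The main obstacle is the elimination step that drives $\Delta(e_1)$ all the way to zero: the hypothesis $\Delta(\beta)=\Delta(\gamma)=0$ only cuts $\Delta(e_1)$ down to a family indexed by paths $\eta'$ ending at $a+1$, and the additional rigidity comes from comparing tensor types of $\Delta(\beta_2)$ computed on the two sides of the adjacent arrow; this is the step where one needs the chain to actually extend beyond vertex $1$ and where the orientation of $\beta_2$ has to be handled (the two cases being dual).
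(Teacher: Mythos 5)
Your proof is correct and follows essentially the same route as the paper: the paper disposes of $\Delta(e_1),\dots,\Delta(e_m)$ by citing Lemma \ref{delta en el tail da cero} (which reduces to the linearly oriented tail via Lemma \ref{tail-con-vertice-de-valencia-dos} and then multiplies by the paths $\gamma\beta_2\cdots\beta_i$), which is the same vanishing propagation you carry out by hand, and your Phase 3 determinations of $\Delta(e_{m+1})$ and $\Delta(e_d)$ coincide with the paper's comparisons along $\beta_{m+1}$, $\varepsilon$ and $\mu$. The only caveat is that your chain induction is written for the linearly oriented case; for mixed orientations one should, as the paper does, first invoke the valency-two source/sink lemma, but this is a routine adjustment.
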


\begin{proof}
As in lemma \ref{delta en el tail da cero}  $\D(e_{i})=0$ for all $ i=1,\cdots, m$. For $e_{m+1}$,
$$\D(e_{m+1})=\displaystyle{\sum_{t(\eta)=m+1} G_{\eta} \varepsilon \otimes \eta }+ \displaystyle{\sum_{t(\eta)=m+1} G'_{\eta} e_{m+1} \otimes \eta}.$$
 Comparing with  $\D(e_{m})=0$ using the arrow $\b_{m+1}$ and the fact that $\b_{t+1}\varepsilon=0$, we deduce that every scalar $G'_{\eta}$ has to be zero. Then  $\D(e_{m+1})=\displaystyle{\sum_{t(\eta)=m+1} G_{\eta} \varepsilon \otimes \eta }$.\

A general expression for  $\D(e_d)= J\mu\otimes \varepsilon +K \mu\otimes e_d +L e_d\otimes\varepsilon$. Comparing with $\D(e_{d_{m+1}})$ using the arrow $\varepsilon$ we obtain $L=G_{e_{m+1}}$ and comparing with $\D(e_{d_{m}})$ using the arrow $\mu$ we get $K=0$. Finally  $\D(e_d)= J \mu\otimes \varepsilon+ G_{e_{m+1}} e_d\otimes\varepsilon$.
\end{proof}

\begin{coro}\label{tail con free cycle al final}
Let $Q$ be the  following quiver with $\small{a,a+1}$  vertices of valency 2. $$\SelectTips{eu}{10}\xymatrix@R=.8pc@C=.5pc{ \ar@{.}[rr]& & \scriptstyle{a-1} \ar[rr] & & \scriptstyle{a} \ar[rr]^{\a_a} && \scriptstyle{a+1}  \ar[rr]  &&   \scriptstyle{a+2} \ar@{.}[rr]  & & }$$

Let  $Q'$  be the quiver obtained by adding to $Q$ a tail $\SelectTips{eu}{10}\xymatrix@R=.5pc@C=.5pc{\scriptstyle{1} \ar@{-}[rr]& & \scriptstyle{2}    \ar@{.}[rr]& &   \ar@{-}[rr]  && \scriptstyle{m} }$ through the arrow $\a_a$ and gluing  a saturated cycle
$\SelectTips{eu}{10}\xymatrix@R=.4pc@C=.2pc{&  \scriptstyle{m+1}\ar[dr]  &     \\ \scriptstyle{m} \ar[ru]& & \scriptstyle{d}    \ar[ll] }$ at the vertex  $m$ of the tail. \

Name $\A$ the algebra  given by the quiver $Q$ and  $\A'$  the algebra associated to $Q'$. Then,  $$\Fr{\A'}=\Fr\A+ \ell_{\rightarrow m+1}+2.$$
\end{coro}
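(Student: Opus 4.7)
My plan is to reduce this corollary to the preceding Lemma \ref{lineal-freecycle} by verifying its hypothesis $\D'(\b)=\D'(\c)=0$ in the quiver $Q'$, where $\b\colon 1\to a$ and $\c\colon a+1\to 1$ are the two new arrows produced by attaching the $Y$-tail through $\a_a$. Once this is in place, the Lemma describes $\D'$ on all new vertices, and a short compatibility check forces $\D'|_{Q_0}=\D|_{Q_0}$, so the proof reduces to counting the free scalars introduced.

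First I would establish $\D'(\b)=\D'(\c)=0$ by repeating the valency-two argument from Proposition \ref{agregar un Y-tail a un lineal}. Writing $\D'(e_{a+1})=\sum_{\sigma,\eta}G_{\sigma,\eta}\,\sigma\otimes\eta$ with $s(\sigma)=t(\eta)=a+1$, the hypothesis that $a+1$ has valency two in $Q$ forces every $\eta$ to be either $e_{a+1}$ or to terminate with $\a_a$. Comparing $\D'(\a_a)=\D'(e_a)(1\otimes\a_a)=(\a_a\otimes 1)\D'(e_{a+1})$ kills the coefficients of all $\sigma\otimes e_{a+1}$ terms, and multiplying what remains by $1\otimes\c$ gives $\D'(\c)=0$ because $\a_a\c=0$ in the saturated cycle at vertex $1$. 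The symmetric computation on the $a$-side yields $\D'(\b)=0$.

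Next I would apply Lemma \ref{lineal-freecycle} verbatim to conclude that $\D'(e_i)=0$ for $i=1,\ldots,m$ and that
\[
\D'(e_{m+1})=\sum_{t(\eta)=m+1}G_\eta\,\varepsilon\otimes\eta,\qquad \D'(e_d)=J\,\mu\otimes\varepsilon+G_{e_{m+1}}\,e_d\otimes\varepsilon.
\]
A short comparison of $\D'(\a_{a-1})$ and $\D'(\a_{a+1})$ from the two sides of each arrow, identical to the closing step of Proposition \ref{agregar un Y-tail a un lineal}, then shows that no new terms involving $\b$ or $\c$ appear in $\D'(e_a)$ or $\D'(e_{a+1})$, so $\D'|_{Q_0}=\D|_{Q_0}$. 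Counting the newly introduced scalars: the paths $\eta$ ending at $m+1$ are $e_{m+1},\b_{m+1},\b_m\b_{m+1},\ldots,\b_2\cdots\b_{m+1}$, giving $\ell_{\rightarrow m+1}+1$ parameters $G_\eta$, and $J$ supplies one more, for a total of $\ell_{\rightarrow m+1}+2$, which is the claimed increase.

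The hard part will be Step 1: in $Q'$ there are new paths into $a+1$ that are not present in $Q$ (any path terminating at vertex $1$ extended by $\c$), so one has to verify that the saturation of the cycle $\a_a\c\b$ really does kill every would-be new contribution and that the valency-two argument of Proposition \ref{agregar un Y-tail a un lineal} still closes in the enlarged quiver. The remaining steps are essentially bookkeeping controlled by Lemma \ref{lineal-freecycle}.
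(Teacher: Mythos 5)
Your proposal is correct and follows essentially the same route as the paper: the paper likewise invokes Proposition \ref{agregar un Y-tail a un lineal} to get $\D'(\b)=\D'(\c)=0$ and $\D'|_{Q_0}=\D|_{Q_0}$, then applies Lemma \ref{lineal-freecycle} to the glued saturated cycle and counts the $\ell_{\rightarrow m+1}+1$ scalars $G_\eta$ together with $J$. The only difference is that you re-derive the valency-two argument instead of merely citing the proposition, which is harmless.
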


\begin{proof}
According to proposition   \ref{agregar un Y-tail a un lineal} and lemma \ref{lineal-freecycle} (or its dual) it is enough to observe  that  the number of different $G_{\eta}$ at lemma \ref{lineal-freecycle} is $|(\K Q')e_{m+1}|=\ell_{\rightarrow m+1}+1$  and $\D(e_d)$ add one more for the scalar $J$.\

Observe that $\ell_{\rightarrow m+1}+2=\ell_{\rightarrow m+1}\ell_{\leftarrow m+1}+\ell_{\rightarrow d}\ell_{\leftarrow d}+1$.
\end{proof}

Now we are ready to state and prove the main result of this section. \

\begin{teo}
If $\A$ is a  cluster tilted algebra of type $\mathbb{A}$, then $\A$  has  finite Frobenius dimension. Moreover,
$$\Fr\A= \sharp \mathcal{B} + \displaystyle{\sum_{b\in \mathcal{V_S}} \ell_{\rightarrow b}.\ell_{\leftarrow b} } $$

\end{teo}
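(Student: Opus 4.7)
The plan is to prove the theorem by decomposing the quiver $Q$ of $\A$ into its elementary pieces and assembling the coproduct $\D$ from the local computations of the preceding lemmas. Recall that $\D$ is determined by its values on the idempotents $e_v$ for $v\in Q_0$, and for each $v$ one has $\D(e_v)=(e_v\otimes 1)\D(e_v)(1\otimes e_v)$, so $\D(e_v)$ is a linear combination of tensors $\rho\otimes\eta$ with $s(\rho)=t(\eta)=v$ subject to the compatibility $\D(\alpha)=\D(e_{s(\alpha)})(1\otimes\alpha)=(\alpha\otimes 1)\D(e_{t(\alpha)})$ for every arrow $\alpha$. Counting the independent scalar parameters that survive these constraints is what yields $\Fr\A$.

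First I would decompose $Q$ as the union of its basis paths (joined at special vertices) together with the auxiliary tails attached through interior arrows. By Lemma \ref{tail-con-vertice-de-valencia-dos} and Proposition \ref{agregar un Y-tail a un lineal}, any tail that is not itself a basis path (i.e.\ any tail hanging off an interior arrow of a basis path) forces $\D$ to vanish on its interior and does not alter $\Fr\A$. So the only active contributions to $\Fr\A$ come from the basis paths themselves and from the special vertices at which they meet.

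Next I would count contributions from each piece. By the lineal structure of \cite[Lemma 1]{AGL} together with Lemma \ref{lineal-freecycle}, Corollary \ref{lineal-freecycle-lineal} and Lemma \ref{caso-particular-freecycle-lineal-freecycle}, each basis path carries a single free scalar (the parameter $F$ or $B$ of those lemmas) that propagates consistently along the path and is independent of the scalars attached to other basis paths; this yields the summand $\sharp\mathcal{B}$. By Lemma \ref{caso particular almost free cycle} and Remark \ref{vertices especiales suman l>--.l-->}, each special vertex $b$ contributes $\ell_{\rightarrow b}\cdot\ell_{\leftarrow b}$ further free parameters $A_{\rho\mu}$, and summing over $b\in\mathcal{V_S}$ yields the second summand. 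The degenerate case with no basis paths is covered by Lemma \ref{los-tres-tails-salientes}, in which case both sides of the formula vanish. Finiteness of $\Fr\A$ is then automatic since $Q$ has only finitely many vertices and arrows.

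The main obstacle is the combinatorial bookkeeping: one must verify that every quiver in $\MM^A_k$ satisfying the saturation condition on 3-cycles really is assembled from the configurations covered by the lemmas (requiring a careful case analysis of how basis paths, special vertices and attached tails can interact), and that the scalars identified on different basis paths and at different special vertices remain linearly independent after gluing. The key point, already visible in Lemma \ref{caso-particular-freecycle-lineal-freecycle}, is that the compatibility relations at a special vertex $b$ only couple scalars on paths passing through $b$, and these couplings are exactly accounted for by the product $\ell_{\rightarrow b}\cdot\ell_{\leftarrow b}$; no further relations between basis paths are introduced.
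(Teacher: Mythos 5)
Your proposal follows essentially the same route as the paper's own proof: one independent linear structure per basis path (via the lemmas on the three types of basis paths), a contribution of $\ell_{\rightarrow b}\cdot\ell_{\leftarrow b}$ per special vertex, and vanishing of $\D$ on the attached tails and on quivers without basis paths. The only point your sketch glosses over is that two basis paths may also cross at a vertex of valency four (which is not a special vertex), a configuration the paper handles explicitly as Case 2 of its proof with three sub-cases; your phrase ``joined at special vertices'' hides exactly this situation, but it falls under the case analysis you correctly identify as the remaining bookkeeping.
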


%$\ell_{\rightarrow b}$ to denote the length of the largest path with target $b$ and $\ell_{\leftarrow b}$ the length of the largest path with source  $b$.

\begin{proof}
We first claim   that for every basis path we have a linear structure. Let  $\mathcal{P} : 1 \rightsquigarrow n$ be a basis path. If $\mathcal{P}$ is of the first type  \cite[Lemma 1]{AGL} implies that $\D_{\mid \mathcal{P}}$ is the linear structure. If $\mathcal{P}$ is of the second type (or third type)    lemma \ref{lineal-freecycle} (or lemma \ref{caso-particular-freecycle-lineal-freecycle} respectively ) shows that  the coproduct $\D$ restricted to $\mathcal{P}$ is the linear structure.   Observe that if $\A$ does not have basis paths, according to lemmas \ref{tail-con-vertice-de-valencia-dos} and  \ref{los-tres-tails-salientes} $\D$ restricted to $Q_0\setminus \mathcal{V_S}$ is zero.\

% lemma \ref{lineal-freecycle} shows that  the coproduct $\D$ over the vertices of $\mathcal{P}\setminus \{n\}$ is the linear structure.  Finally, using lemma \ref{caso-particular-freecycle-lineal-freecycle} for $\mathcal{P}$  of the third type    we can see that the coproduct $\D$ over the vertices of $\mathcal{P}\setminus \{1,n\}$ is also the linear structure.

Now assume that we have two basis paths $\mathcal{P}$ and $\mathcal{P}'$ that share a vertex $b$, then we have two cases:

\begin{enumerate}
  \item $b=t(\mathcal{P})=s(\mathcal{P}')$ is an special vertex:  using corollary \ref{lineal-freecycle-lineal} we conclude that we have two independent linear structures, one for each basis path. Moreover, from the same result we obtain that the special vertex  $b$ add $\ell_{\rightarrow b}.\ell_{\leftarrow b}$ to the $\Fr\A$.
  \item $b$ is a vertex of valency four: we can assume that $\mathcal{P}: \SelectTips{eu}{10}\xymatrix@R=.8pc@C=.5pc{ \cdots \ar[rr]& & \scriptstyle{c_1} \ar[rr] & & \scriptstyle{b} \ar[rr] && \scriptstyle{c_3}  \ar[rr]  &&  \cdots & }$ is of any type and let $\D_{\mathcal{P}}$ be the coproduct restricted to the algebra $\A'$ associated to the quiver of $\mathcal{P}$ (i.e. a linear structure). Then we have the following  three different situations depending on the type of $\mathcal{P}'$.

%we can assume without loss of generality that $\mathcal{P}':b\rightarrow c$ has length one. Then, using
\begin{itemize}

  \item [a)] $\mathcal{P}': f\rightsquigarrow d_2 \rightarrow b \rightarrow d_1 \rightsquigarrow g$ of the first type:
$\SelectTips{eu}{10}\xymatrix@R=.8pc@C=.5pc{ \cdots \ar[rr]& & \scriptstyle{c_1} \ar[rr] & & \scriptstyle{b} \ar[rr] \ar[dl]_{\b_2}  && \scriptstyle{c_3} \ar[dl]^{\b_3}  \ar[rr]  && \cdots & \\ & & & \scriptstyle{d_1} \ar@{~>}[d]_{\a'}\ar[ul]^{\d_2} & & \scriptstyle{d_2} \ar[ul]^{\d_3} & &&&  \\
&&& \scriptstyle{g}  && \scriptstyle{f} \ar@{~>}[u]_{\a} &&&&}$

Then,  a simple computation gives

$$\begin{array}{l}
   \D(e_{f})= A \a\d_3\b_2\a'\otimes e_f \\
   \hspace{.5cm} \vdots \\
   \D(e_{d_2})= A \d_3\b_2\a'\otimes \a\\
   \D(e_{b})= A \b_2\a'\otimes \a\d_3 + \D_{\mathcal{P}}(e_{b})\\
   \D(e_{d_1})= A \a'\otimes \a\d_3\b_2\\
   \hspace{.5cm} \vdots \\
   \D(e_{g})= A e_g \otimes \a\d_3\b_2\a'
\end{array}$$

  \item [b)] $\mathcal{P}': f\rightsquigarrow d_2 \rightarrow b \rightarrow d_1 $ of the second type: $\SelectTips{eu}{10}\xymatrix@R=.8pc@C=.5pc{ \cdots \ar[rr]& & \scriptstyle{c_1} \ar[rr] & & \scriptstyle{b} \ar[rr] \ar[dl]_{\b_2}  && \scriptstyle{c_3} \ar[dl]^{\b_3}  \ar[rr]  && \cdots & \\ & & & \scriptstyle{d_1} \ar[ul]^{\d_2} & & \scriptstyle{d_2} \ar[ul]^{\d_3} & &&&  \\
&&&  && \scriptstyle{f} \ar@{~>}[u]_{\a} &&&&} $

Assume that $\a:f \rightsquigarrow d_2=\a_1\cdots \a_n$. Then,
$$\begin{array}{l}
   \D(e_{f})= A \a\d_3\b_2\otimes e_f\\
   \hspace{.5cm} \vdots \\
   \D(e_{d_2})= A \d_3\b_2\otimes \a \\
   \D(e_{b})= A \b_2\otimes \a\d_3 + \D_{\mathcal{P}}(e_{b})\\
   \D(e_{d_1})= A e_{d_1}\otimes \a\d_3\b_2 + B_1 \d_2\otimes \b_2 + B_2 \d_2\otimes \d_3\b_2 + \displaystyle{\sum_{i=1}^n B_{3+n-i} \d_2\otimes  \a_i\cdots \a_n\d_3\b_2}
\end{array}$$

  \item [c)] $\mathcal{P}': d_2 \rightarrow b \rightarrow d_1 $ of the third type:  $\SelectTips{eu}{10}\xymatrix@R=.8pc@C=.5pc{  \cdots \ar[rr]& & \scriptstyle{c_1} \ar[rr] & & \scriptstyle{b} \ar[rr] \ar[dl]_{\b_2}  && \scriptstyle{c_3} \ar[dl]^{\b_3}  \ar[rr]  && \cdots & \\ & & & \scriptstyle{d_1} \ar[ul]^{\d_2} & & \scriptstyle{d_2} \ar[ul]^{\d_3} & &&& }$

A simple computation gives
$$\begin{array}{l}
   \D(e_{b})= A \b_2\otimes \d_3 + \D_{\mathcal{P}}(e_{b})\\
  \D(e_{d_1})= A e_{d_1}\otimes \d_3 \b_2 + B \d_2\otimes \b_2 + C \d_2\otimes \d_3 \b_2 \\
  \D(e_{d_2})= A \d_3 \b_2 \otimes e_{d_2} + D \d_3\otimes \b_3 + E \d_3 \b_2\otimes \b_3
\end{array}$$
\end{itemize}

\end{enumerate}

Then we conclude that $\mathcal{P}'$ add an  independent linear structure. Also, we obtain that in the cases where $d_i$ is an  special vertex it  add $\ell_{\rightarrow d_i}.\ell_{\leftarrow d_i}$ to the $\Fr\A$. In general, according to corollary \ref{tail con free cycle al final}, every special vertex $d$ add $\ell_{\rightarrow d}.\ell_{\leftarrow d}$ to the $\Fr\A$.\

It remaind to proof that $\D(e_v)=0$ for all  non special vertex  $v$ not belonging to any basis path. It follows from lemma \ref{los-tres-tails-salientes}, proposition \ref{agregar un Y-tail a un lineal} and corollary \ref{tail con free cycle al final}.

\end{proof}

\begin{coro}
Let $\A$ be a cluster tilted algebra of type $\mathbb{A}$. Then $\Fr\A\geq 1$ if and only if $\sharp \mathcal{V_S}  + \sharp \mathcal{B}\neq 0$
\end{coro}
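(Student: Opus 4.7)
The plan is to deduce this corollary directly from Theorem A, which gives the closed formula
$$\Fr\A = \sharp \mathcal{B} + \sum_{b\in \mathcal{V_S}} \ell_{\rightarrow b}\cdot \ell_{\leftarrow b}.$$
Both summands on the right hand side are non-negative integers, so the argument reduces to checking that each term is positive exactly when its indexing set is non-empty.

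First I would handle the forward implication by contraposition: if $\sharp \mathcal{V_S} + \sharp \mathcal{B} = 0$, then $\mathcal{B}$ is empty and the sum over $\mathcal{V_S}$ is an empty sum, hence by Theorem A $\Fr\A = 0$.

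For the converse, I would split into two cases. If $\sharp \mathcal{B} \neq 0$, then $\Fr\A \geq \sharp \mathcal{B} \geq 1$ immediately. If $\sharp \mathcal{V_S} \neq 0$, pick any $b \in \mathcal{V_S}$; by Definition~\ref{def de camino base y vertice especial} there exist arrows $\a,\b \in Q_1$ with $t(\a)=s(\b)=b$, which forces $\ell_{\rightarrow b} \geq 1$ and $\ell_{\leftarrow b} \geq 1$, hence $\ell_{\rightarrow b}\cdot \ell_{\leftarrow b} \geq 1$ and therefore $\Fr\A \geq 1$.

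There is no genuine obstacle here: the statement is an elementary book-keeping consequence of Theorem A once one observes the positivity of $\ell_{\rightarrow b}$ and $\ell_{\leftarrow b}$ at a special vertex. The only subtlety worth pointing out is that a special vertex in the sense of Definition~\ref{def de camino base y vertice especial} automatically has both an incoming and an outgoing arrow, so neither factor in the product $\ell_{\rightarrow b}\cdot \ell_{\leftarrow b}$ can vanish.
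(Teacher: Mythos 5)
Your proposal is correct and is precisely the argument the paper intends: the corollary is stated without proof as an immediate consequence of Theorem A, and your reduction to the positivity of each summand, together with the observation that a special vertex has both an incoming and an outgoing arrow (so $\ell_{\rightarrow b}\cdot\ell_{\leftarrow b}\geq 1$), is the whole content. Nothing is missing.
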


%
%\begin{coro}
%Let $\A$ be a non hereditary cluster tilted algebra of type $\mathbb{A}$. Then $\Fr\A\geq 1$ if $\A$ satisfies (at least) one of the following:
%
%\begin{enumerate}
%  \item $\A$  has free or almost free cycles.
%  \item $\A$ can be obtained by adding tails through arrows of  a linearly oriented tail.
% \end{enumerate}
%
%\end{coro}

%which is not obtained by adding tails  to an algebra with the quiver:
%
%$$\SelectTips{eu}{10}\xymatrix@R=.7pc@C=.7pc{ . \ar@{~>}[rr] & & . \ar[rr] & &  . \ar[dl]  &&   . \ar@{~>}[ll]  &&  & \\ & & & . \ar[ul] & &  & &&&  \\
%&&&&&&\\
%&&&  . \ar@{~>}[uu]  &&   &&&&}  $$

\begin{ejem}
 The algebra  $\A$  given by the following bound quiver has  $\Fr\A=0$.

$$\SelectTips{eu}{10}\xymatrix@R=.8pc@C=.5pc{ \scriptstyle{1} & & \scriptstyle{2} \ar[dr] \ar[ll]_{}  &&   \scriptstyle{3} \ar[ll]_{} & & \ar[ll] \scriptstyle{4}  && \scriptstyle{5}  \ar[ll]_{} \ar[rr]^{}  &&  \scriptstyle{6} \ar[dl]^{} \ar[rr]^{} & & \scriptstyle{7}  \ar[rr] && \scriptstyle{8}  \ar[dl] \ar[rr]^{}  && \scriptstyle{9} \\
&&& \scriptstyle{16} \ar[d]\ar[ru] &&&&& & \scriptstyle{10} \ar[lu]^{} \ar[d]^{} & &&& \scriptstyle{14}  \ar[ul]  &&&& \\
&&& \scriptstyle{17} &&&&& & \scriptstyle{11} \ar[dd] & &&& \scriptstyle{15} \ar[u] &&& \\
&&&&&&\scriptstyle{19}\ar[rr] && \scriptstyle{18} \ar[ur] & & &&& \scriptstyle{20} \ar[u] &\\
&&&&&&&& & \scriptstyle{12}  \ar[ul]\ar[d]^{} & &&&&&& \\
&&&&&&&& & \scriptstyle{13}  & &&&&&&}$$

%
%$\SelectTips{eu}{10}\xymatrix@R=.8pc@C=.5pc{&&&&&&&& & && &&\scriptstyle{b} \ar[rd]^{\gamma_2}  & &&& \\
% \scriptstyle{1} \ar[rr]^{\a_1}& & \scriptstyle{2}   \ar@{.}[rr] &&   \scriptstyle{3}\ar[rr]^{\a_i} & &  \scriptstyle{4}  \ar[dl]^{\gamma} \ar[rr]^{\a_{i+1}}&& \scriptstyle{5}  \ar@{.}[rr]  &&  \scriptstyle{6}  \ar[rr]^{\a_{n}}  && \scriptstyle{7} \ar[ru]^{\gamma_1}  &&  \scriptstyle{8} \ar[ll]^{\gamma_3} \\
%&&&& & \scriptstyle{d} \ar[lu]^{\b} \ar[d]^{\b_2} & &&&&&& \\
%&&&& & \scriptstyle{d_3} \ar@{.}[d] & &&&&&& \\
%&&&& & \scriptstyle{d_t} \ar[d]^{\b_t} & &&&&&& \\
%&&&& & \scriptstyle{d_{t+1}}  & &&&&&&}$

\end{ejem}

\subsection{Further consecuences}

 Given  a quiver $Q$ without loops and
$2$-cycles and a vertex $k$, we denote by $\mu_k(Q)$ the Fomin-Zelevinsky quiver
mutation \cite{FZ02} of $Q$ at $k$. Two quivers are called
\emph{mutation equivalent} if one can be reached from the other by a
finite sequence of quiver mutations.
In particular, two quivers are called \emph{sink/source equivalent} if one can be obtained
from the other by performing mutations only at vertices which are sinks
or sources.
%The \emph{mutation class} of a quiver $Q$ is the set of all quivers which are mutation equivalent to $Q$.

\begin{ejem}This example show that the Frobenius dimension is not invariant under sink/source mutations.
%The following two quivers are sink/source equivalent

$$\begin{array}{ccc}
  \SelectTips{eu}{10}\xymatrix@R=.8pc@C=.6pc{ \scriptstyle{1} \ar[rr]& & \scriptstyle{2}   \ar[dl] \ar[rr] &&   \scriptstyle{3} & &  \scriptstyle{4} \ar[ll] \\
 & \scriptstyle{5} \ar[lu]  & &&&&&& } & \stackrel{\mu_4}{\longmapsto} \hspace{.5cm} & \SelectTips{eu}{10}\xymatrix@R=.8pc@C=.6pc{ \scriptstyle{1} \ar[rr]& & \scriptstyle{2}   \ar[dl] \ar[rr] &&   \scriptstyle{3} \ar[rr] & &  \scriptstyle{4} \\
 & \scriptstyle{5} \ar[lu]  & &&&&&& } \\
&&\\
\hspace{-1cm} \Fr\A=4 && \Fr {\mu_4(\A)}=6
\end{array}$$

\end{ejem}

\begin{ejem} Moreover the fact of have non-zero Frobenius dimension neither is an invariant under mutation as we can see at this example where the algebras $\A$ and $\A'$ are mutation equivalent.

$$\begin{array}{ccc}
  \SelectTips{eu}{10}\xymatrix@R=.8pc@C=.6pc{ \scriptstyle{1}  & & \scriptstyle{2} \ar[rr] \ar[ll] & &  \scriptstyle{3}  \ar[dl] \ar[rr] &&   \scriptstyle{4}   &&  & \\ & & &  \scriptstyle{5} \ar[d] \ar[ul] & &  & &&&  \\
&&&  \scriptstyle{6}   &&   &&&&}  $$
  &  \hspace{.5cm} & \SelectTips{eu}{10}\xymatrix@R=.8pc@C=.6pc{ \scriptstyle{1} \ar[rr]& & \scriptstyle{2}   \ar[dl] \ar[rr] &&   \scriptstyle{3} \ar[rr] & &  \scriptstyle{4} \ar[rr] && \scriptstyle{6} \\
 & \scriptstyle{5} \ar[lu]  & &&&&&& } \\
&&\\
\hspace{-1cm}\Fr\A=0 && \Fr {\A'}=7
\end{array}$$

\end{ejem}

In particular, we conclude that the Frobenius dimension is not a derived invariant. \\

From the examples we can observe that the exact number of the Frobenius dimension is not really interesting but we are still interested in knowing whatever or not a cluster tilted  algebra has non-zero Frobenius dimension and equivalently admit at least one open Frobenius structure.\\

\section{Open Frobenius Cluster-Tilted Algebras of type $\mathbb{D}$}

%@R=1cm@C=1cm

%$$\xymatrix{ a \ar[dr] & \\ & c \\ b \ar[ur] & }$$
%$$\xymatrix{ & b \ar[dl] & \\ d \ar[rr] && c \ar[dl] \ar[ul] \\ & a \ar[lu] &}$$
%$\xymatrix{ & b \ar[dl] & \\ d \ar[rd] && c  \ar[ul] \\ & a \ar[ru] &}$
%*++[o][F]{c \hspace{.8cm} }

Following \cite{V} we will describe the quivers of cluster-tilted algebras of type $\mathbb{D}$.
Let $\MM^A_k$ be the mutation class of $\mathbb{A}_k$. The union of all $\MM^A_k$ for all $k$ will be denoted by $\MM^A$.
For a quiver $\Gamma$ in $\MM^A$, we will say that a vertex $v$ is a \emph{connecting vertex} if $v$ has valency 1 or if $v$ has valency
2 and  $v$ belongs to a 3-cycle in $\Gamma$.
Let $Q$  be a quiver with $n$ vertices having a full subquiver $\widehat{Q}$ of one of the following four types:\\

\begin{center}
\begin{tabular}{|c|c|c|c|}
  \hline
    % after \\: \hline or \cline{col1-col2} \cline{col3-col4} ...
    Type I    & Type II   & Type III  &  Type IV \\ \hline
    &&& \\
    \SelectTips{eu}{10}\xymatrix{ a \ar@{-}[dr] & \\ & c  \\ b \ar@{-}[ur] &   } & \SelectTips{eu}{10}\xymatrix{ & b \ar[dl]_{\d} & \\ d \ar[rr]^{\mu} && c \ar[dl]^{\beta} \ar[ul]_{\a} \\ & a \ar[lu]^{\c} &}
    &  \SelectTips{eu}{10}\xymatrix{ & b \ar[dl]_{\b} & \\ d \ar[rd]_{\c} && c  \ar[ul]_{\a} \\ & a \ar[ru]_{\d} &}  &  \SelectTips{eu}{10}\xymatrix@R=.4pc@C=.3pc{  &&& c_{\a_2} \ar[ddl]_{\c_2} &&&&&  \\&&&&&&&& \\ c_{\a_1} \ar[dd]_{\c_1}& & . \ar[rr]_{\a_2} \ar[ll]_{\b_1} && . \ar[uul]_{\b_2}\ar[ddrr] &&&&  \\&&&&&&&& \\. \ar[rruu]_{\a_1}&&&& &&.\ar[dd]_{\a_i}&&  \\&&&&&&&& c_{\a_i} \ar[ull]_{\c_i}  \\. \ar[uu]_{\alpha_k} \ar@{.}@/_/[rrrrrr]&&&&&& . \ar[rru]_{\b_i} &&} \\
    &&&\\
    &&& \\\hline
\end{tabular}
\end{center}

\medskip

For $\widehat{Q}$ of type I, II or III,  let $Q'$ be $(Q\setminus \widehat{Q})\cup \{c\}$ and  $Q''$ be $(Q\setminus \widehat{Q})\cup \{d\}$. For $\widehat{Q}$ of type IV the cycle $\alpha_1\alpha_2 \cdots \alpha_k $  is a directed $k$-cycle (called the \emph{central cycle}), with $k\geq 3$. For each arrow $\alpha :a\to b$ in the central cycle, there may (and may not) be a vertex $c_{\alpha}$ which is not on the central cycle, such that there is an oriented 3-cycle $a\overset{\alpha}{\to} b\to c_{\alpha}\to a$. Such a 3-cycle will be called a \emph{spike}. For each spike let $Q^{\alpha}$ be the quiver $(Q\backslash \widehat{Q}) \cup \{c_{\alpha} \}$.\\

 We say that $Q$ is of:

 \begin{itemize}
   \item   \underline{Type I}  if  $Q$ has a full subquiver $\widehat{Q}$ of type I, the vertices $a$ and $b$  have valency one and both are a sink (or a source), $Q'$ is in $\MM^A_{n-2}$ and $c$ is a connecting vertex for $Q'$.
   \item  \underline{Type II} (or \underline{Type III} ) if  $Q$ has a full subquiver $\widehat{Q}$ of type II (or type III, respectively),   the vertices $a$ and $b$ have valency 2, $Q'$ and $Q''$ are both in $\MM^A$ and have $c$  and $d$ as connecting vertices respectively.
   %\item  \underline{Type III} if the vertices $a$ and $b$ have valency 2,  $Q'$ and $Q''$ are both in $\MM^A$ and $c$ ( $d$) is a  connecting vertex for $Q'$ ( $Q''$, respectively).
   \item \underline{Type IV} if  $Q$ has a full subquiver $\widehat{Q}$ of type IV, $Q^{\alpha}$ is in $\MM^A$ and has the vertex  $c_{\alpha}$ as a connecting vertex.
 \end{itemize}

Observe that in types II, III and IV, the subquivers $Q', Q'', Q^{\alpha}$ can be in the set $\MM^A_1$, i.e. they can have only one vertex.
%We define $\MM^D_n$ to be the set of quivers $Q$   belonging to one of the four types described above.\\
We define $\MM^D_i$ ($i\in \{$ I,II,III,IV $ \})$ to be the set of quivers $Q$   belonging to the  type $i$ described above. \\

\subsection{Cluster-tilted algebras of type $\mathbb{D}$  and sub-type I }

According to \cite{BMR},  $A=\K Q/ I $ is a cluster-tilted algebra of type $\mathbb{D}$  and sub-type I if and only if   $Q\in \MM^D_ {\rm I}$ and every 3-cycle is \textit{saturated}. \\

Given a vertex $c\in Q_0$ let $\mathcal{B}_{c}$  be the set of all basis paths  $\mathcal{P} : v_1 \rightsquigarrow v_t$ such that $v_i=c$ for some $1\leq i\leq t$.

\begin{prop}
Let $A=\K Q/ I $ be  a cluster-tilted algebra of type $\mathbb{D}$  and sub-type I, then $$\Fr\A= \sharp (\mathcal{B}\setminus \mathcal{B}_{c})  + \displaystyle{\sum_{b\in \mathcal{V_S}} \ell_{\rightarrow b}.\ell_{\leftarrow b} } $$

\end{prop}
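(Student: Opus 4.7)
The plan is to follow the template of Theorem~A for $\mathbb{A}$-type cluster tilted algebras, viewing $Q$ as the type-$\mathbb{A}$ quiver $Q'$ with the two leaves $a,b$ grafted onto $c$. The guiding principle is that the Type~I configuration at $c$ annihilates every contribution to $\D$ that would have been supported on a path through $c$, while leaving the remaining structure intact, so that the total count reduces to $\sharp(\mathcal{B}\setminus\mathcal{B}_c)$ linear structures plus the special-vertex contributions exactly as in Theorem~A.

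The crucial step is to show that every open Frobenius structure $\D$ on $\A$ satisfies $\D(e_a)=\D(e_b)=\D(e_c)=0$. Assuming, without loss of generality, that $a$ and $b$ are both sinks with arrows $\alpha\colon c\to a$ and $\beta\colon c\to b$, the vertex $a$ has valency one so $\D(e_a)=\sum_\mu F_\mu\,e_a\otimes\mu$ (the first factor must be $e_a$), and similarly for $\D(e_b)$. Expanding $\D(e_c)=\sum_{\rho,\eta}G_{\rho,\eta}\,\rho\otimes\eta$ with $s(\rho)=c=t(\eta)$ and equating the two presentations $\D(\alpha)=(\alpha\otimes 1)\D(e_a)=\D(e_c)(1\otimes\alpha)$ forces the vanishing of every $G_{\rho,\eta}$ whose first factor is not $\alpha$; the symmetric identity coming from $\D(\beta)$ then kills those with first factor $\beta$. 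Intersecting these constraints yields $\D(e_c)=0$, and then $(\alpha\otimes 1)\D(e_a)=0$ and $(\beta\otimes 1)\D(e_b)=0$ drop out $\D(e_a)=\D(e_b)=0$ immediately.

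From $\D(e_c)=0$ I would propagate: every path $p$ factoring as $p_1p_2$ with $p_1$ ending at $c$ and $p_2$ starting at $c$ satisfies $\D(p)=p_1\cdot\D(e_c)\cdot p_2=0$. Consequently, for a basis path $\mathcal{P}\in\mathcal{B}_c$ the linear structure produced by \cite[Lemma 1]{AGL} and Lemma~\ref{lineal-freecycle} would contain a term $F\,\mathcal{P}\otimes\mathcal{P}$ in $\D(\mathcal{P})$, and the enforced vanishing $\D(\mathcal{P})=0$ forces $F=0$. Hence no basis path through $c$ contributes to $\Fr\A$. For basis paths in $\mathcal{B}\setminus\mathcal{B}_c$ and for special vertices, the local lemmas invoked in the proof of Theorem~A (Lemmas~\ref{tail-con-vertice-de-valencia-dos}, \ref{lineal-freecycle}, \ref{caso-particular-freecycle-lineal-freecycle}, \ref{caso particular almost free cycle} and their corollaries) still apply in $Q$ because each depends only on the local structure at the relevant basis path or special vertex, which is untouched by the grafting of $a,b$. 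Summing the $\sharp(\mathcal{B}\setminus\mathcal{B}_c)$ independent linear structures with the $\sum_{b\in\mathcal{V_S}}\ell_{\rightarrow b}\ell_{\leftarrow b}$ parameters from special vertices gives the claimed formula.

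The main obstacle is the first step: making the coefficient-matching argument that $\D(e_c)=0$ fully rigorous in the case where $c$ already lies in a saturated 3-cycle of $Q'$, so that $c$ has valency four in $Q$ and the expansion of $\D(e_c)$ carries many more $\rho\otimes\eta$ terms that need to be handled simultaneously against the two identities coming from $\alpha$ and $\beta$. A secondary, bookkeeping obstacle is to verify linear independence of the surviving parameters: one must check that no constraint coming from a basis path in $\mathcal{B}\setminus\mathcal{B}_c$ or a special vertex secretly pulls a path through $c$ back into the coproduct, which is ruled out by the propagated vanishing established above.
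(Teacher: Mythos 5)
Your proposal is correct and follows essentially the same route as the paper: establish $\D(e_c)=\D(e_a)=\D(e_b)=0$, deduce that every basis path through $c$ carries only the trivial structure, and keep the Theorem~A count for everything else. The only difference is that the paper obtains the key vanishing $\D(e_c)=0$ by citing \cite[Lemma 2]{AGL}, whereas you reprove it directly by coefficient matching against $\D(\alpha)$ and $\D(\beta)$ (and your worry about the valency-four case is unfounded, since $\alpha$ and $\beta$ lie in no $3$-cycle and hence in no relation, so the matching argument is insensitive to the other neighbours of $c$).
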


\begin{proof}

It is enough to observe that according to \cite[Lemma 2]{AGL} $\D(e_c)=0$. Then $\D\mid_{\mathcal{P}}\equiv 0$, for every basis path $\mathcal{P}\in \mathcal{B}_{c}$. Moreover,  $\D(e_a)=\D(e_b)=0$.
\end{proof}

\subsection{Cluster-tilted algebras of type $\mathbb{D}$  and sub-type II }

$A=\K Q/ I $ is a cluster-tilted algebra of type $\mathbb{D}$  and sub-type II if and only if   $Q\in \MM^D_ {\rm II}$, $\a\d=\b\c$, $\mu \a=\mu\b=\d\mu=\c\mu=0$ and every 3-cycle in $Q'\cup Q''$ is \textit{saturated}  \cite{BMR}. \\

We adapt the definition \ref{def de camino base y vertice especial} to this case.
 We say that
\begin{itemize}
  \item [a)] a vertex $v\in Q_0$ is an \emph{special vertex} if $v$ has valency two and there are $\alpha, \beta \in Q_1$ with $s(\b)=t(\a)=v$ and $\a\b\in I$ or $v\in \{c,d\}$ and has valency three. Let denote by $\mathcal{V_S}^*$ the new set of  special vertices.
  \item [b)] an  \emph{extended basis path }  is a path $\mathcal{P} : v \rightsquigarrow v'$ such that $v,v'$ are or an special vertex  or a source or a sink of valency one. We denote by   $\mathcal{B}^*$ the set of all extended basis paths.
\end{itemize}

%\begin{defi} Given $A=\K Q/ I $ a cluster-tilted algebra of type $\mathbb{D}$  and sub-type II,  we say that an  \emph{extended basis path }  is a path $\mathcal{P} : v \rightsquigarrow v'$ such that $v,v'$ are or an special vertex  or a source or a sink of valency one or the vertices $d$ or $c$. We denote by   $\mathcal{B}^*$ the set of all extended basis paths.
%\end{defi}

Given a vertex $z$ we will denote by $\sharp_{\leftarrow z}=dim_{\K}( e_z(\K Q/ I)) - 1$  (or $\sharp _{\rightarrow z}= dim_{\K} ((\K Q/ I)e_z) -1 $ )  the number of non trivial paths with source ( or target, respectively ) $z$.

\begin{prop}
Let $A=\K Q/ I $ be  a cluster-tilted algebra of type $\mathbb{D}$  and sub-type II, then $$\Fr\A= \sharp \mathcal{B}^*  + \displaystyle{\sum_{b\in \mathcal{V_S}^*} \sharp_{\rightarrow b}.\sharp_{\leftarrow b} } $$

\end{prop}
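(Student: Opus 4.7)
The plan is to follow the strategy that proves Theorem A: decompose $Q$ into its extended basis paths plus its extended special vertices, show that each extended basis path $\mathcal{P}\in\mathcal{B}^*$ supports a one-parameter family of coproducts (a linear structure), show that every vertex $b\in\mathcal{V_S}^*$ adds $\sharp_{\rightarrow b}\cdot\sharp_{\leftarrow b}$ independent free parameters, and show that $\D(e_v)=0$ for every other vertex. The new ingredient relative to type $\mathbb{A}$ is the subquiver $\widehat{Q}$ of type II, carrying the commutativity $\a\d=\b\c$ and the four zero relations $\mu\a=\mu\b=\d\mu=\c\mu=0$; this is exactly what forces the replacement of $\ell_{\rightarrow b}\cdot\ell_{\leftarrow b}$ by $\sharp_{\rightarrow b}\cdot\sharp_{\leftarrow b}$ in the final formula.

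First I would analyse $\D$ on the four vertices of $\widehat{Q}$. The vertices $a$ and $b$ have local valency two with the nonzero composition $\b\c=\a\d$, so they behave as interior vertices of a path from $c$ to $d$: using $\D(\c)=\D(e_a\c)=\D(\c e_d)$ and the analogous identities for $\a,\b,\d$, I will show that $\D(e_a)$ and $\D(e_b)$ contribute no independent parameter and are entirely determined by $\D(e_c)$ and $\D(e_d)$ via the commutative square. For the vertices $c$ and $d$, which are the new type of extended special vertex, the four zero relations involving $\mu$ play the role that saturated $3$-cycle relations play in Lemma~\ref{lineal-freecycle}: writing $\D(e_c)=\sum_{\rho,\eta}G_{\rho,\eta}\,\rho\otimes\eta$ with $\rho\in e_cA$ and $\eta\in Ae_c$, the identities $\D(\mu)=\D(e_d\mu)=\D(\mu e_c)$ together with $\mu\a=\mu\b=0$ force the expression to split as a linear-structure contribution for each extended basis path meeting $c$, plus a free family indexed by pairs of non-trivial basis elements of $Ae_c$ and $e_cA$. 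Since the commutative square identifies two otherwise distinct formal paths, the correct count of independent tensors is $\sharp_{\rightarrow c}\cdot\sharp_{\leftarrow c}$ rather than a product of path lengths, and symmetrically for $d$.

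Once $\widehat{Q}$ has been handled, the rest of $Q$ decomposes as the quivers $Q'$ and $Q''$ in $\MM^A$ attached at the connecting vertices $c$ and $d$. All the remaining arguments from Section~2 transfer with the previously obtained values of $\D$ at $c$ and $d$ serving as boundary data: Lemmas~\ref{tail-con-vertice-de-valencia-dos} and \ref{delta en el tail da cero}, Corollary~\ref{tail con fuente/pozo + ciclo}, Proposition~\ref{agregar un Y-tail a un lineal} and Corollary~\ref{tail con free cycle al final} determine $\D$ on $Q'$ and $Q''$. Combining, each extended basis path contributes one independent linear structure, every special vertex $b\ne c,d$ of $Q'\cup Q''$ contributes $\sharp_{\rightarrow b}\cdot\sharp_{\leftarrow b}$ parameters (with $\sharp$ coinciding with $\ell$ there, since such vertices are unaffected by the commutative relation), and $\D(e_v)=0$ on every remaining vertex. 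Summing the independent contributions yields the announced formula.

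The main obstacle is the first step, namely ruling out any ``extra'' coproduct supported on the commutative square itself. Concretely, one must prove that no free scalar multiplies tensors of the form $\c\otimes\b$ or $\d\otimes\a$ inside $\D(e_a)$ or $\D(e_b)$ beyond what is already prescribed by $\D(e_c)$ and $\D(e_d)$; the four zero relations $\mu\a=\mu\b=\d\mu=\c\mu=0$ are precisely what is needed to annihilate such terms through the comparisons $\D(\mu\a)=0=\D(\mu\b)$ applied to $\D(e_b)$ and $\D(e_a)$. Once this rigidity is established, the remainder of the proof reduces to the type-$\mathbb{A}$ bookkeeping already developed in Section~2, and the counting argument goes through.
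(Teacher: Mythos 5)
Your proposal follows essentially the same route as the paper's own proof: treat $c$ and $d$ as new special vertices, let the extended basis paths carry linear structures, reduce everything outside $\widehat{Q}$ to the type-$\mathbb{A}$ bookkeeping of Section 2 applied to $Q'$ and $Q''$, and replace path lengths by path counts $\sharp_{\rightarrow}\cdot\sharp_{\leftarrow}$ precisely because the commutativity relation $\a\d=\b\c$ identifies formal paths, so the algebra is no longer gentle. If anything, your local analysis of $\widehat{Q}$ --- the rigidity at $a$ and $b$ and the role of $\mu\a=\mu\b=\d\mu=\c\mu=0$ in freeing the tensors at $c$ and $d$ --- is spelled out in more detail than the paper's rather terse argument.
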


\begin{proof}
First observe that in this case  we possible  add the vertices $c$ and $d$ to the set of special vertices that we already had for types $\mathbb{A}$ and  $\mathbb{D}$ sub-type I. In this way we possible have new basis paths ( called extended basis paths ) that contribute also with linear structures.  Then, as for the  $\mathbb{A}$-case,  for every extended basis path we have a linear structure.\\
When we compute the Frobenius dimension  and we look at an special vertex $z$, we were interested in the  number of non trivial paths with source (or target ) $z$, in other words, the numbers $dim_{\K}( e_z(\K Q/ I)) - 1$ and $dim_{\K} ((\K Q/ I)e_z) -1 $.  Cluster tilted algebras of types $\mathbb{A}$  are gentle which implies that those numbers coincide with the length of the largest path with source (or target, respectively )  $z$.
Finally, an adaptation of the formula for the  $\mathbb{A}$-case to this one gives  our claim.

\end{proof}

We illustrate the case with the following example.

\begin{ejem}

Let $\A$ be the algebra  given by the following  quiver

$$\SelectTips{eu}{10}\xymatrix{ & b \ar[dl]_{\d} &    & y_1 \ar[dl]_{\a_1} \ar[r]^{\a_2} & \cdot \ar@{.}[r]  & \cdot  \ar[r]^{\a_n}& y_n  \\ d \ar[rr]^{\mu} && c \ar[dl]^{\beta} \ar[ul]_{\a} \ar[dr]_{\b_1}  &&&&
\\ & a \ar[lu]^{\c} & & x_1 \ar[uu]_{\phi_1}  \ar[r]^{\b_2} & \cdot \ar@{.}[r]  & \cdot  \ar[r]^{\b_m}& x_m}$$
\end{ejem}

bounded by the relations $\a\d=\b\c$, $\mu \a=\mu\b=\d\mu=\c\mu=\phi_1\a_1=\a_1\b_1=\b_1\phi_1=0$.\\

A simple computation gives

\begin{eqnarray*}
\D(e_b) &=& 0 \\
\D(e_a) &=& 0 \\
\D(e_{y_i}) &=& 0 \text{ \ \ ; $1 \leq i \leq n$}\\
\D(e_{x_j}) &=& A \b_{j+1}\cdots \b_{m}\otimes \mu \b_1 \cdots \b_j \\
\D(e_c) &=& A \b_{1}\cdots \b_{m}\otimes \mu\\
\D(e_d) &=& A \mu\b_{1}\cdots \b_{m}\otimes e_d+ B \mu\otimes \delta  + C \mu\otimes \c +D \mu\otimes \a\delta + E \mu\otimes \a_1\a\delta + \displaystyle{\sum_{i=1}^m B_{i}  \mu\b_{1}\cdots \b_{i}\otimes \delta } \\
 & & + \displaystyle{\sum_{i=1}^m C_{i}  \mu\b_{1}\cdots \b_{i}\otimes \c }+ \displaystyle{\sum_{i=1}^m D_{i}  \mu\b_{1}\cdots \b_{i}\otimes \a\delta }+ \displaystyle{\sum_{i=1}^m E_{i}  \mu\b_{1}\cdots \b_{i}\otimes \a_1\a\delta  }
\end{eqnarray*}

Then $\Fr\A=1+4(m+1)$. \\

For the last two sub-types we are going to find a lower bound for the Frobenius dimension.

\subsection{Cluster-tilted algebras of type $\mathbb{D}$  and sub-type III }

$A=\K Q/ I $ is a cluster-tilted algebra of type $\mathbb{D}$  and sub-type III if and only if   $Q\in \MM^D_ {\rm III}$, $\a\b\c=\b\c\d=\c\d\a=\d\a\b=0$ and every 3-cycle in $Q'\cup Q''$ is \textit{saturated}  \cite{BMR}. \\

\begin{prop}
Let $A=\K Q/ I $ be  a cluster-tilted algebra of type $\mathbb{D}$  and sub-type III, then $$\Fr\A \geq 2 $$

\end{prop}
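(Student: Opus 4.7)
The plan is to exhibit two linearly independent open Frobenius coproducts on $A$ that are supported entirely on the length-$2$ paths of the central $4$-cycle $\widehat{Q}$. The key structural observation is that, in sub-type III, both $a$ and $b$ have valency $2$ in $Q$, so no external arrow is incident to these two vertices; any Casimir element of $A\otimes A$ concentrated at $a$ or $b$ therefore only has to commute with the four internal arrows $\a,\b,\c,\d$, and the length-$3$ monomial relations $\a\b\c=\b\c\d=\c\d\a=\d\a\b=0$ are exactly strong enough to make this happen.

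Concretely, I would propose
$$\Delta_{1}(e_b)=\b\c\otimes\d\a,\quad \Delta_{1}(e_v)=0\ \text{for}\ v\neq b,\qquad \Delta_{2}(e_a)=\d\a\otimes\b\c,\quad \Delta_{2}(e_v)=0\ \text{for}\ v\neq a.$$
Both $\Delta_{i}(1)$ are nonzero in $A\otimes A$, since the length-$2$ paths $\b\c$ and $\d\a$ survive in $A$: the sub-type III relations are all of length $3$ inside $\widehat{Q}$, and the saturated $3$-cycle relations coming from $Q'\cup Q''$ do not involve $\a,\b,\c,\d$ at all. To verify the bimodule property it is enough to test $\nu\cdot\Delta_{i}(1)=\Delta_{i}(1)\cdot\nu$ on each arrow $\nu$ of $Q$. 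For $\Delta_{1}$, every arrow $\nu\notin\{\a,\b\}$ satisfies $t(\nu)\neq b$ and $s(\nu)\neq b$, so both sides vanish trivially; for $\nu=\a$ and $\nu=\b$ the four relevant monomials
$$\a\b\c\otimes\d\a,\qquad \b\c\otimes(\d\a)\a,\qquad (\b\b)\c\otimes\d\a,\qquad \b\c\otimes\d\a\b$$
all vanish, the outer two by $\a\b\c=0$ and $\d\a\b=0$, the inner two for elementary compositional reasons. The verification for $\Delta_{2}$ is entirely symmetric, using the relations $\c\d\a=0$ and $\b\c\d=0$.

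Since $\Delta_{1}(1)=\b\c\otimes\d\a$ and $\Delta_{2}(1)=\d\a\otimes\b\c$ are distinct basis elements of $A\otimes A$, the coproducts $\Delta_{1}$ and $\Delta_{2}$ are linearly independent, and therefore $\Fr{A}\geq 2$. The only step requiring any real care is handling the external arrows at the connecting vertices $c$ and $d$; it is precisely the valency-$2$ hypothesis on $a$ and $b$ that makes these arrows harmless in the check above, since $\b\c$ and $\d\a$ have both endpoints in $\{a,b\}$ while every external arrow at $c$ or $d$ fails to source from or target $a$ or $b$.
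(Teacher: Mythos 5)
Your proof is correct and rests on exactly the same observation as the paper's: the two tensors $\delta\alpha\otimes\beta\gamma$ (at $e_a$) and $\beta\gamma\otimes\delta\alpha$ (at $e_b$) survive all the bimodule constraints because of the length-three relations on the central $4$-cycle and the valency-$2$ hypothesis on $a$ and $b$. The only difference is presentational — you exhibit the two coproducts explicitly and verify the Casimir condition arrow by arrow, whereas the paper argues that these two coefficients remain free parameters in an arbitrary coproduct; your version is, if anything, the more carefully checked of the two.
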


\begin{proof}
It is enough to see that $\D(e_a) = A \delta\a\otimes \b\c +  \sum_{\rho,\eta} G_{\rho,\eta} \rho \otimes \eta$  with $s(\rho)=t(\eta)=a$ and $\rho \neq \delta\a $ , $\eta\neq \b\c$. Since $\b\c\d=\c\d\a=0$ we are sure that $A\neq 0$. \\
 In the same way, we can affirm that $\D(e_b) = B \b\c\otimes \delta\a +  \sum_{\eta,\rho} G_{\eta,\rho} \eta\otimes \rho$  with $t(\rho)=s(\eta)=b$ and $\rho \neq \delta\a $ , $\eta\neq \b\c$. Since $\a\b\c=\d\a\b=0$ we are sure that $B\neq 0$.
\end{proof}

The following example shows an algebra with Frobenius dimension exactly two.

\begin{ejem}

Let $\A$ be the algebra  given by the following  quiver

$$\SelectTips{eu}{10}\xymatrix{t_{n'} & \cdot  \ar[l]_{\delta_{n'}} \ar@{.}[r]  & \cdot & t_1 \ar[l]_{\delta_2} \ar [dr]^{\delta_1}  &    & b \ar[dl]_{\b} &    & y_1 \ar[dl]_{\a_1} \ar[r]^{\a_2} & \cdot \ar@{.}[r]  & \cdot  \ar[r]^{\a_n}& y_n  \\ &  &  &   & d \ar[dr]_{\c}  \ar[dl]^{\c_1}&& c \ar[ul]_{\a} \ar[dr]_{\b_1}  &&&&
\\  z_{m'} & \cdot  \ar[l]^{\c_{m'}} \ar@{.}[r]  & \cdot & z_1 \ar[l]^{\c_2} \ar [uu]^{\phi_2}  &    &  a \ar[ur]^{\delta} & & x_1 \ar[uu]_{\phi_1}  \ar[r]^{\b_2} & \cdot \ar@{.}[r]  & \cdot  \ar[r]^{\b_m}& x_m}$$
\end{ejem}

bounded by the relations $\a\b\c=\b\c\d =\c\d\a=\d\a\b=0$, $\delta_1\gamma_1=\gamma_1\phi_2=\phi_2\delta_1=0$ ,$\phi_1\a_1=\a_1\b_1=\b_1\phi_1=0$.\\

A simple computation gives

\begin{eqnarray*}
\D(e_d) &=& \D(e_c) = 0 \\
\D(e_{y_i}) &=& 0 \text{ \ \ ; $1 \leq i \leq n$}\\
\D(e_{x_i}) &=&  0 \text{ \ \ ; $1 \leq i \leq m$}\\
\D(e_{t_i}) &=& 0 \text{ \ \ ; $1 \leq i \leq n'$}\\
\D(e_{z_i}) &=& 0 \text{ \ \ ; $1 \leq i \leq m'$}\\
\D(e_b) &=& B \b\c\otimes \delta\a \\
\D(e_a) &=& A \delta\a\otimes \b\c
\end{eqnarray*}

Then $\Fr\A=2$. \\

\subsection{Cluster-tilted algebras of type $\mathbb{D}$  and sub-type IV }

$A=\K Q/ I $ is a cluster-tilted algebra of type $\mathbb{D}$  and sub-type IV if and only if   $Q\in \MM^D_ {\rm IV}$,  every 3-cycle in $\cup_i Q^{\a_i}$ is \textit{saturated} and for each arrow $\a_i$ in the central cycle $\a_{i+1}\cdots \a_k\a_1\cdots \a_{i-1}=\b_i\c_i$ if there exits $c_{\a_i}$ or $\a_{i+1}\cdots \a_k\a_1\cdots \a_{i-1}=0$ otherwise. Indices are read modulo $k$.  \cite{BMR}. \\

\begin{prop}
Let $A=\K Q/ I $ be  a cluster-tilted algebra of type $\mathbb{D}$  and sub-type IV with a central cycle of length $k$, then $$\Fr\A \geq k $$

\end{prop}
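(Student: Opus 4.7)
Plan. My goal is to produce $k$ linearly independent open Frobenius coproducts $\D^{(1)},\ldots,\D^{(k)}$ on $\A$, one associated with each vertex $v_i$ of the central cycle. This mirrors the strategy used for sub-type III above, where a $2$-dimensional family arose by placing an independent free scalar at each of the two vertices of the $4$-cycle; the length-$k$ central cycle should analogously admit $k$ such ``cyclic splittings''.

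As a preliminary step, I would verify that the full cyclic composition $C_i:=\a_i\a_{i+1}\cdots\a_{i-1}$ is zero in $\A$ for every $i$. When no spike is attached to $\a_i$, the stated relation $\a_{i+1}\cdots\a_{i-1}=0$ gives $C_i=0$ directly. When a spike exists, $\a_{i+1}\cdots\a_{i-1}=\b_i\c_i$ so $C_i=\a_i\b_i\c_i$; the wrong-direction compositions $\a_j\b_j=0$ and $\c_j\a_j=0$ in each spike 3-cycle are monomial zero relations forced by the cluster-tilted structure (they are the unique length-$2$ paths between the respective endpoints, of the same flavour as the sink/source-of-valency-$2$ relations used in Lemma~\ref{tail-con-vertice-de-valencia-dos}). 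Hence $C_i=0$ in both cases.

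The main construction: for each $i\in\{1,\ldots,k\}$ I define
\[
\D^{(i)}(e_{v_j})=p_{j\to i}\otimes p_{i\to j}
\]
for vertices $v_j$ on the central cycle, where $p_{x\to y}$ denotes the forward cycle-path of length $(y-x)\bmod k$ (with $p_{i\to i}=e_{v_i}$), and $\D^{(i)}(e_u)=0$ for $u$ off the central cycle. The $\A$-bimodule condition at a central-cycle arrow $\a_j$ reduces, via the telescoping identities $p_{i\to j}\,\a_j=p_{i\to j+1}$ and $\a_j\,p_{j+1\to i}=p_{j\to i}$, to an equality holding on the nose; the only subtle cases occur when $j=i$ or $j=i-1$, where a length-$k$ factor equal to $C_i$ appears on one side and vanishes by the preceding step. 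Compatibility with the spike arrows $\b_j,\c_j$ is automatic, because every path $p_{i\to j+1}$ ends with $\a_j$ (killed by $\a_j\b_j=0$) and every path $p_{j\to i}$ begins with $\a_j$ (killed by $\c_j\a_j=0$). For an arrow lying entirely in an off-cycle subquiver $Q^{\a_j}\setminus\{c_{\a_j}\}$ both sides of the bimodule condition are zero, so nothing is to check.

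Linear independence is obtained by selecting, for each $i$, a vertex $v_j$ at which both $p_{j\to i}$ and $p_{i\to j}$ are non-zero; the length pair $((i-j)\bmod k,(j-i)\bmod k)$ then distinguishes $\D^{(i)}$ inside the graded space $e_{v_j}\A\otimes\A e_{v_j}$. The hard part I expect is the degenerate situation $k=3$ with no spikes, where every length-$2$ path on the central cycle already vanishes and the cyclic split above collapses to zero at every vertex. In that case I would use instead the alternative $\D^{(i)}(e_{v_i})=\a_i\otimes\a_{i-1}$, zero at the other two vertices; the length-$2$ relation $\a_{i-1}\a_i=0$ (which is precisely the no-spike relation forced for $k=3$) makes this a valid $\A$-bimodule map, and these three coproducts are visibly linearly independent, giving $\Fr\A\geq 3=k$ in this case as well.
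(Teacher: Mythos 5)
Your strategy (one independent structure per central-cycle vertex) is the right one and matches the paper's, but the explicit maps $\D^{(i)}$ you define are not $\A$-bimodule morphisms, and the failure occurs exactly at the step you dismiss as merely ``subtle''. The convention $p_{i\to i}=e_{v_i}$ forces $\D^{(i)}(e_{v_i})=e_{v_i}\otimes e_{v_i}$, and testing the bimodule condition along $\a_i\colon v_i\to v_{i+1}$ gives $\D^{(i)}(e_{v_i})(1\otimes\a_i)=e_{v_i}\otimes\a_i\neq 0$ on one side, while the other side is $(\a_i\otimes 1)\D^{(i)}(e_{v_{i+1}})=\a_i\,p_{i+1\to i}\otimes\a_i=C_i\otimes\a_i=0$. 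The full cycle $C_i$ does vanish, but only on \emph{one} side of the equation, so the two sides disagree; the same happens at $\a_{i-1}$ (you get $\a_{i-1}\otimes e_{v_i}$ against $0$) and, when $\a_i$ carries a spike, at $\b_i$ and $\c_i$ as well. In fact no coproduct can carry a nonzero coefficient on $e_x\otimes e_x$ at a vertex $x$ meeting an arrow: if $\a\colon x\to y$ is an arrow, then $(\a\otimes 1)\D(e_y)$ consists of terms of the form $\a\sigma\otimes\eta$ and can never reproduce the term $e_x\otimes\a$ occurring in $\D(e_x)(1\otimes\a)$. The fix is to set $\D^{(i)}(e_{v_i})=0$ (equivalently, keep only those components $p_{j\to i}\otimes p_{i\to j}$ in which both tensor factors survive in $\A$); with that change your telescoping and spike checks do go through, but you must then confirm $\D^{(i)}\neq 0$, and the degeneration is larger than you allow for: without spikes all cycle paths of length $k-1$ already vanish, so the components at $j=i\pm1$ die too, and the whole map collapses precisely when $k=3$ --- where your fallback $\a_i\otimes\a_{i-1}$ is correct (and is exactly the term the paper uses there).

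Two smaller points. First, your verification that $C_i=0$ in the presence of spikes only treats the basepoint at which the spike sits ($C_i=\a_i\b_i\c_i=0$); for another basepoint $i'$ the long path $p_{j+1\to j}=\b_j\c_j$ is not a contiguous subpath of $C_{i'}$, so the commutativity relation cannot be substituted directly --- the quickest argument is that cluster-tilted algebras of finite type are schurian, so $e_{v_{i'}}\A e_{v_{i'}}$ is spanned by $e_{v_{i'}}$ and the radical element $C_{i'}$ must vanish. Second, for comparison: the paper avoids constructing maps altogether and instead shows that in the \emph{general} coproduct one coefficient per central-cycle vertex is left unconstrained; in the no-spike case it picks a term $\rho\otimes\eta$ with both tensor factors of length $k-2$, which is killed by multiplication by any arrow on either side and is therefore supported at a single vertex, making linear independence immediate. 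Your (repaired) construction spreads each structure over several vertices and so needs the extra length-grading argument for independence, which you do supply.
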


\begin{proof}

Let us start with the case where there are not spikes.  Then, for each arrow $\a_i$  in the central cycle we have $\a_{i+1}\cdots \a_k\a_1\cdots \a_{i-1}=0$.  We affirm that $\D(e_2)$ has the term $A\a_2\cdots\a_{k-1}\otimes\a_4\cdots\a_k\a_1$ with $A\neq 0$. Since $\a_{1}\cdots \a_{k-1}=0$ and $\a_{4}\cdots \a_k\a_1\a_{2}=0$, the scalar $A$ does not appear in the expression of $\D(\a_1)$ and $\D(\a_2)$ and, in consequence, does not appear in $\D(e_i)$ with $i \neq 1$. The same reasoning applied to any vertex $i$ gives $\Fr\A\geq k$.\

We now turn to the case where there are  spikes.  We can assume that there exists an spike $c_{\a_1}$. Then, $\a_2\cdots\a_{k}=\b_1\c_1$ and $\a_1\b_1=\c_1\a_1=0$. We affirm that $\D(e_2)$ has the term $A\a_2\cdots\a_{k}\otimes\a_1$ with $A\neq 0$. Since  $\a_1\b_1=0$ and $\a_2\cdots\a_{k}=\b_1\c_1$, the scalar $A$ does not appear in the expression of $\D(\b_1)$ and $\D(\a_1)$. On the other hand the term $A\a_2\cdots \a_k\otimes \a_1\a_2$ has to appear in the expression of $\D(\a_2)$ and consequently $A \a_3\cdots \a_k\otimes\a_1\a_2\a_3$ appears in the expression of $\D(e_3)$. If there are not others spikes, we continue with this reasoning showing that  $A \a_{i}\cdots \a_k\otimes\a_1\cdots\a_{i-1}$ appears in the expression of $\D(e_i)$ for $i< k$ and does not appear for $i=k$ because $\a_1\cdots\a_{k-1}=0$. This establishes $A\neq 0$. We now suppose that there exits an spike $c_{\a_i}$. It follows, by the same method  as before, that $A \a_{i}\cdots \a_k\otimes\a_1\cdots\a_{i-1}$ appears in the expression of $\D(e_i)$. Since $\c_i\a_i=0$ the scalar $A$ does not appear in the expression of $\D(\c_i)$ . The rest of the proof runs as before. \
Observe that as before we can apply this argument to any idempotent $e_j$ and then $\Fr\A\geq k$.

\end{proof}

\newpage
\section{Open Frobenius Cluster-Tilted Algebras of type $\mathbb{E}$}

We start by computing the mutation class of  $\mathbb{E}_6$. This can be done, for example, by using the Java applet of
Keller~\cite{Keller-software}. The mutation class of $\mathbb{E}_6$ consists of
67 quivers. In the table below we list all the quivers in the mutation class of type $\mathbb{E}_6$.
In some quivers certain arrows are replaced by undirected lines; this has to be
read that these lines can take any orientation. For each  quiver in the table  we
compute the relations of the corresponding cluster-tilted algebra
according to~\cite{BMR}. Since there is at most one arrow between any two vertices, we indicate a path by the sequence of vertices it traverses. A zero-relation is then indicated by a sequence of the form $(a,b,c,\dots)$ and a commutativity relation has the form $(a,b,c,\dots)-(a',b',c',\dots)$.

\begin{center}
\begin{tabular}{|c|c|c|}
  \hline
    % after \\: \hline or \cline{col1-col2} \cline{col3-col4} ...
    No. & Quiver     &  Relations   \\ \hline

1&  \begin{tabular}{c} \\ \SelectTips{eu}{10}\xymatrix@R=1.8pc@C=1.8pc{   && \scriptstyle{6} \ar@{-}[d] && \\ \scriptstyle{1} \ar@{-}[r] & \scriptstyle{2} \ar@{-}[r]  & \scriptstyle{3} \ar@{-}[r] & \scriptstyle{4} \ar@{-}[r] & \scriptstyle{5}  } \\ \\ \end{tabular}
& {\scriptsize\begin{tabular}{c} \\ None \\ \end{tabular}} \\ \hline

2&  \begin{tabular}{c} \\ \SelectTips{eu}{10}\xymatrix@R=1.8pc@C=1.8pc{   & \scriptstyle{3} \ar[dr] & \scriptstyle{5}\ar@{-}[r] \ar[l] & \scriptstyle{6} \\  \scriptstyle{1} \ar@{-}[r] & \scriptstyle{2} \ar[u]   & \scriptstyle{4}  \ar[u] \ar[l] &   }  \\ \\ \end{tabular}  &  {\scriptsize\begin{tabular}{c} \\ $(2,3,4)$, $(3,4,2)$, $(5,3,4)$, $(3,4,5)$,\\ $(4,2,3)-(4,5,3)$ \\
\end{tabular}} \\\hline

3& \begin{tabular}{c} \\  \SelectTips{eu}{10}\xymatrix@R=1.8pc@C=1.8pc{   & \scriptstyle{6} \ar[r]  & \scriptstyle{5}\ar[dl] & \\  \scriptstyle{1} \ar@{-}[r] & \scriptstyle{2} \ar[u] \ar[r]  & \scriptstyle{3} \ar@{-}[r] \ar[u]  &  \scriptstyle{4}   } \\ \\ \end{tabular} & {\scriptsize \begin{tabular}{c} \\ $(3,5,2)$, $(5,2,3)$, $(6,5,2)$, $(5,2,6)$, \\
$(2,6,5)-(2,3,5)$ \\ \end{tabular}} \\\hline

4& \begin{tabular}{c} \\ \SelectTips{eu}{10}\xymatrix@R=1.8pc@C=1.8pc{   & \scriptstyle{1} \ar[r]  \ar[d] & \scriptstyle{4} \ar[d] & \\  \scriptstyle{5} \ar@{-}[r] & \scriptstyle{2} \ar[r] \ar[r]  & \scriptstyle{3} \ar@{-}[r] \ar[ul]  &  \scriptstyle{6}   } \\ \\ \end{tabular} & {\scriptsize \begin{tabular}{c} \\ $(2,3,1)$, $(3,1,2)$, $(4,3,1)$, $(3,1,4)$, \\ $(1,2,3)-(1,4,3)$ \\ \end{tabular}}\\\hline

5& \begin{tabular}{c} \\ \SelectTips{eu}{10}\xymatrix@R=1.1pc@C=.55pc{   && & \scriptstyle{3} \ar[dr] &  \\  \scriptstyle{1} \ar@{-}[rr] & &\scriptstyle{2} \ar[drr] \ar[ur]    && \scriptstyle{4}  \ar[ll]  \\  && \scriptstyle{5} \ar[u] & & \scriptstyle{6}  \ar[ll] \ar[u] } \\ \\ \end{tabular} & {\scriptsize \begin{tabular}{c} \\ $(3,4,2)$, $(4,2,3)$, $(4,2,6)$, $(2,6,5)$, \\ $(5,2,6)$,
$(2,3,4)-(2,6,4)$, \\ $(6,4,2)-(6,5,2)$ \\ \end{tabular}} \\\hline

6& \begin{tabular}{c} \\ \SelectTips{eu}{10}\xymatrix@R=1.1pc@C=.55pc{ &  & & \scriptstyle{3} \ar[dr] &  \\  \scriptstyle{1} \ar@{-}[rr] && \scriptstyle{2} \ar[ur] \ar[d]   && \scriptstyle{4}  \ar[ll] \ar[d] \\ & & \scriptstyle{6} \ar[rru] & & \scriptstyle{5} \ar[ll] } \\ \\ \end{tabular} &  {\scriptsize \begin{tabular}{c}\\ $(3,4,2)$, $(4,2,3)$, $(6,4,2)$, $(5,6,4)$, \\ $(6,4,5)$,
$(2,3,4)-(2,6,4)$, \\ $(4,2,6)-(4,5,6)$ \\ \end{tabular}} \\\hline

\end{tabular}
\end{center}

\begin{center}
\begin{tabular}{|c|c|c|}
  \hline
    No. & Quiver     &  Relations   \\ \hline

7& \begin{tabular}{c} \\ \SelectTips{eu}{10}\xymatrix@R=1.1pc@C=.55pc{  & & & \scriptstyle{6} \ar[dl] &  \\  \scriptstyle{1} \ar@{-}[rr] && \scriptstyle{2} \ar[rr]    && \scriptstyle{5}  \ar[lu] \ar[dll]  \\  && \scriptstyle{3} \ar[rr] \ar[u] & & \scriptstyle{4} \ar[u] } \\ \\ \end{tabular} & {\scriptsize \begin{tabular}{c} \\  $(2,5,6)$, $(6,2,5)$, $(2,5,3)$, $(4,5,3)$, \\ $(5,3,4)$, $(5,6,2)-(5,3,2)$, \\ $(3,2,5)-(3,4,5)$ \\ \end{tabular}}  \\\hline

8& \begin{tabular}{c} \\ \SelectTips{eu}{10}\xymatrix@R=1.8pc@C=1.8pc{   \scriptstyle{1} \ar[r] & \scriptstyle{2} \ar[ld] \ar[rd]  & \scriptstyle{3} \ar[l] \\  \scriptstyle{4} \ar[u] \ar[r] & \scriptstyle{5} \ar[u] &   \scriptstyle{6} \ar[u] \ar[l]  } \\ \\ \end{tabular} &  {\scriptsize \begin{tabular}{c} \\ $(1,2,4)$, $(2,4,1)$, $(5,2,4)$, $(5,2,6)$, \\ $(2,6,3)$, $(3,2,6)$, $(4,1,2)-(4,5,2)$, \\ $(2,4,5)-(2,6,5)$, $(6,3,2)-(6,5,2)$ \\
\end{tabular}}  \\\hline

9& \begin{tabular}{c} \\ \SelectTips{eu}{10}\xymatrix@R=1.8pc@C=1.8pc{   \scriptstyle{1} \ar[r] & \scriptstyle{2} \ar[r] \ar[dl] & \scriptstyle{3} \ar[d] \\  \scriptstyle{4} \ar[u]  \ar[r] & \scriptstyle{5}  \ar[u] &   \scriptstyle{6} \ar[l] } \\ \\ \end{tabular}
& {\scriptsize \begin{tabular}{c}  \\ $(1,2,4)$, $(2,4,1)$, $(5,2,4)$, $(3,6,5,2)$, \\ $(6,5,2,3)$, $(5,2,3,6)$,
$(4,1,2)-(4,5,2)$, \\ $(2,4,5)-(2,3,6,5)$ \\ \end{tabular} } \\\hline

10& \begin{tabular}{c} \\ \SelectTips{eu}{10}\xymatrix@R=1.8pc@C=1.8pc{   \scriptstyle{2} \ar[d] & \scriptstyle{3} \ar[d] \ar[r]  \ar[l] & \scriptstyle{5} \ar[d] \\  \scriptstyle{1} \ar[ur] & \scriptstyle{4} \ar[r] \ar[l] &   \scriptstyle{6} \ar[ul] } \\ \\ \end{tabular} &  {\scriptsize \begin{tabular}{c} \\ $(1,3,2)$, $(2,1,3)$,
$(1,3,4)$, $(6,3,4)$, \\ $(6,3,5)$, $(5,6,3)$, $(3,2,1)-(3,4,1)$, \\ $(3,4,6)-(3,5,6)$, $(4,1,3)-(4,6,3)$ \\ \end{tabular} } \\\hline

11& \begin{tabular}{c} \\ \SelectTips{eu}{10}\xymatrix@R=1.8pc@C=1.8pc{   \scriptstyle{1} \ar[r] & \scriptstyle{2} \ar[r] \ar[d] & \scriptstyle{3} \ar[d] \\  \scriptstyle{4} \ar[u] & \scriptstyle{5} \ar[r]  \ar[l] &   \scriptstyle{6} \ar[ul] } \\ \\ \end{tabular} &   {\scriptsize\begin{tabular}{c} \\ $(3,6,2)$, $(6,2,3)$, $(6,2,5)$, $(1,2,5,4)$, \\ $(2,5,4,1)$,
$(4,1,2,5)$, $(2,3,6)-(2,5,6)$, \\ $(5,6,2)-(5,4,1,2)$ \\ \end{tabular}}   \\\hline

12& \begin{tabular}{c} \\ \SelectTips{eu}{10}\xymatrix@R=1.8pc@C=1.8pc{   \scriptstyle{1} \ar[r] & \scriptstyle{2}  \ar[dl] \ar[r]  & \scriptstyle{3} \ar[dl] \\  \scriptstyle{4} \ar[u]  \ar[r]& \scriptstyle{5} \ar[r] \ar[u] &   \scriptstyle{6} \ar[u] } \\ \\ \end{tabular} & {\scriptsize \begin{tabular}{c} \\ $(1,2,4)$,
$(2,4,1)$, $(5,2,4)$, $(3,5,2)$, \\ $(3,5,6)$, $(6,3,5)$, $(2,4,5)-(2,3,5)$, \\ $(4,1,2)-(4,5,2)$, $(5,2,3)-(5,6,3)$ \\ \end{tabular}} \\\hline

13& \begin{tabular}{c} \\ \SelectTips{eu}{10}\xymatrix@R=1.1pc@C=.55pc{    & & & \scriptstyle{3} \ar[dr] &  \\  \scriptstyle{1} \ar@{-}[rr] && \scriptstyle{2} \ar[ur] \ar[d]   && \scriptstyle{4}  \ar[ll]  \\  & & \scriptstyle{5} \ar[rr] & & \scriptstyle{6} \ar[u] } \\ \\ \end{tabular} &   {\scriptsize \begin{tabular}{c} \\ $(3,4,2)$, $(4,2,3)$, $(5,6,4,2)$, $(6,4,2,5)$, \\ $(4,2,5,6)$,
$(2,3,4)-(2,5,6,4)$ \\ \end{tabular}}  \\\hline

14& \begin{tabular}{c} \\ \SelectTips{eu}{10}\xymatrix@R=1.1pc@C=.55pc{  & & & \scriptstyle{6} \ar[dl] &  \\  \scriptstyle{1} \ar@{-}[rr] && \scriptstyle{2} \ar[rr]    && \scriptstyle{3}  \ar[lu] \ar[d] \\ & & \scriptstyle{5}  \ar[u] & & \scriptstyle{6} \ar[ll] } \\ \\ \end{tabular} &  {\scriptsize \begin{tabular}{c} \\ $(2,3,6)$, $(6,2,3)$, $(4,5,2,3)$, $(5,2,3,4)$, \\ $(2,3,4,5)$, $(3,6,2)-(3,4,5,2)$ \\ \end{tabular}}  \\\hline

\end{tabular}
\end{center}

\begin{center}
\begin{tabular}{|c|c|c|}
  \hline
    No. & Quiver     &  Relations   \\ \hline

15& \begin{tabular}{c} \\ \SelectTips{eu}{10}\xymatrix@R=1.1pc@C=.55pc{   && & \scriptstyle{6} \ar[dl] &  \\  && \scriptstyle{3} \ar[rr]    && \scriptstyle{4}  \ar[lu] \ar[d] \\ \scriptstyle{1} \ar@{-}[rr]  && \scriptstyle{2}  \ar[u] & & \scriptstyle{5} \ar[ll] } \\ \\ \end{tabular} &    {\scriptsize \begin{tabular}{c} \\ $(3,4,6)$, $(6,3,4)$,
$(5,2,3,4)$, $(2,3,4,5)$, \\ $(3,4,5,2)$, $(4,6,3)-(4,5,2,3)$ \\ \end{tabular} }  \\\hline

16& \begin{tabular}{c} \\ \SelectTips{eu}{10}\xymatrix@R=1.1pc@C=.55pc{   && & \scriptstyle{6} \ar[dr] &  \\   && \scriptstyle{5} \ar[ur] \ar[d]   && \scriptstyle{4}  \ar[ll]  \\  \scriptstyle{1} \ar@{-}[rr] && \scriptstyle{2} \ar[rr] & & \scriptstyle{3} \ar[u] } \\ \\ \end{tabular} & {\scriptsize \begin{tabular}{c} \\ $(4,5,6)$, $(6,4,5)$, $(2,3,4,5)$, $(3,4,5,2)$, \\ $(4,5,2,3)$, $(5,6,4)-(5,2,3,4)$ \\ \end{tabular} } \\\hline

17& \begin{tabular}{c} \\ \SelectTips{eu}{10}\xymatrix@R=1.8pc@C=1.8pc{   \scriptstyle{1} \ar[r] & \scriptstyle{2} \ar[d]  & \scriptstyle{3} \ar[l] \\  \scriptstyle{4} \ar[u] & \scriptstyle{5} \ar[l] \ar[r] &   \scriptstyle{6} \ar[u] } \\ \\ \end{tabular} &  {\scriptsize \begin{tabular}{c} \\ $(1,2,5,4)$, $(2,5,4,1)$,
$(4,1,2,5)$, \\ $(3,2,5,6)$, $(2,5,6,3)$, $(6,3,2,5)$, \\ $(5,4,1,2)-(5,6,3,2)$ \\ \end{tabular}}   \\\hline

18& \begin{tabular}{c} \\  \SelectTips{eu}{10}\xymatrix@R=1pc@C=.7pc{  &  \scriptstyle{4} \ar[dr] & \scriptstyle{1}   & \scriptstyle{6} \ar[dl] &  \\  \scriptstyle{3} \ar[ur] & & \scriptstyle{2} \ar@{-}[u]  \ar[rr]  \ar[ll] & &  \scriptstyle{5} \ar[lu] } \\ \\ \end{tabular}
& {\scriptsize\begin{tabular}{c} \\ $(2,3,4)$, $(3,4,2)$, $(4,2,3)$, \\ $(2,5,6)$, $(5,6,2)$, $(6,2,5)$ \\
\end{tabular}}  \\\hline

19& \begin{tabular}{c} \\  \SelectTips{eu}{10}\xymatrix@R=1pc@C=.55pc{  & & && \scriptstyle{6} \ar@{-}[d] &\scriptstyle{5} \ar[ld] &  \\ \scriptstyle{1} \ar@{-}[rr] & & \scriptstyle{2} \ar@{-}[rr] & & \scriptstyle{3} \ar[rr] &  & \scriptstyle{4} \ar[lu]   } \\ \\ \end{tabular} &  {\scriptsize\begin{tabular}{c} \\
$(3,4,5)$, $(4,5,3)$, $(5,3,4)$  \\ \\ \end{tabular} } \\\hline

20& \begin{tabular}{c} \\  \SelectTips{eu}{10}\xymatrix@R=1.8pc@C=1.8pc{   & \scriptstyle{5} \ar[d]  & \scriptstyle{6}\ar[l] & \\  \scriptstyle{1} \ar@{-}[r] & \scriptstyle{2}  \ar[r]  & \scriptstyle{4} \ar@{-}[r] \ar[u]  &  \scriptstyle{3}   } \\ \\ \end{tabular} &  {\scriptsize\begin{tabular}{c} \\ $(2,4,6,5)$, $(4,6,5,2)$, $(6,5,2,4)$, \\ $(5,2,4,6)$ \\
\end{tabular}} \\\hline

21& \begin{tabular}{c} \\  \SelectTips{eu}{10}\xymatrix@R=.3pc@C=1.9pc{  &  \scriptstyle{6} \ar[dd] & \scriptstyle{5}  \ar[l]   &  \\ & & & \scriptstyle{4} \ar[ul] \\ \scriptstyle{1}  \ar@{-}[r]  & \scriptstyle{2} \ar[r] & \scriptstyle{3} \ar[ru]  &    } \\ \\ \end{tabular} & {\scriptsize \begin{tabular}{c} \\ $(2,3,4,5,6)$, $(3,4,5,6,2)$, $(4,5,6,2,3)$, \\ $(5,6,2,3,4)$, $(6,2,3,4,5)$ \\
\end{tabular} } \\\hline

\end{tabular}
\end{center}

\begin{teo}
Let $\A$ be a non hereditary cluster tilted algebra of type $\mathbb{E}_6$. Then $\Fr\A\geq 1$.
\end{teo}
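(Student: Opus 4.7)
The strategy is a case-by-case verification using the explicit enumeration of the mutation class of $\mathbb{E}_6$ summarised in the table above. By the classification, a cluster tilted algebra of type $\mathbb{E}_6$ is non hereditary precisely when its quiver contains at least one $3$-cycle, i.e.\ when it is one of the $66$ quivers other than quiver $1$ in the mutation class. For each such quiver $(Q,I)$ the plan is to exhibit a single explicit non-trivial coproduct $\Delta$, since $\Fr{\A}\geq 1$ amounts to producing one non-zero element of the Frobenius space.

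The construction in each case will follow the techniques developed in Sections~2 and~3. Concretely, I would proceed in three layers. First, I would identify in $(Q,I)$ a maximal \emph{basis path} or an analogue of the configurations appearing in Lemmas \ref{lineal-freecycle}, \ref{caso-particular-freecycle-lineal-freecycle} and Corollary \ref{lineal-freecycle-lineal}; by the proofs of those results, such a configuration supports a one-parameter family of non-trivial coproducts (the ``linear structure'') on the corresponding subquiver. Second, for quivers that do not contain such a basis path but do contain a saturated $3$-cycle $\SelectTips{eu}{10}\xymatrix@C=.6pc{a \ar[rr]^{\alpha} && b \ar[dl]^{\beta} \\ & c \ar[lu]^{\gamma} &}$ with $a\beta=\beta\gamma=\gamma\alpha=0$ in $I$, I would use the same local computation as in Lemma \ref{caso particular almost free cycle} to place a non-zero term $A\,\beta\rho\otimes\mu\gamma$ in $\Delta(e_a)$ (with appropriate $\rho,\mu$), the relations guaranteeing that no compatibility condition can force $A=0$. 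Finally, for the handful of quivers such as numbers $18$--$21$ where several $3$-cycles interact and the previous two templates do not immediately fit, I would write out $\Delta(e_v)$ symbolically at a chosen vertex $v$ belonging to the ``longest'' oriented subpath, propagate the coproduct via $\Delta(\alpha)=\Delta(e_{s(\alpha)})(1\otimes\alpha)=(\alpha\otimes 1)\Delta(e_{t(\alpha)})$ along each arrow, and verify that the resulting system of linear constraints on the scalar coefficients still admits at least one free parameter.

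The main obstacle is clearly the number of cases. However, three reductions make the task manageable: (i) many of the $66$ quivers contain a type $\mathbb{A}$ subquiver with a basis path so that the type $\mathbb{A}$ theorem applies verbatim to a sub-configuration; (ii) the quivers containing a type IV subquiver in the sense of the $\mathbb{D}$ discussion can be treated by the same argument as in the proof of the central cycle proposition, giving $\Fr{\A}\geq k\geq 3$ on the nose; (iii) the remaining quivers fall into a small number of isomorphism classes, on each of which one explicit $\Delta$ suffices. The only genuinely delicate step is checking, for the cases with several overlapping relations (for instance quiver $8$, $10$, $12$ with three $3$-cycles meeting at a single vertex), that the constraints coming from all commutativity relations $\alpha\delta=\beta\gamma$ and all zero-relations do not simultaneously annihilate every candidate scalar; this will be done by selecting the free coefficient to live on a path that does not enter any commutativity relation, typically the longest oriented path between two sinks/sources of valency one in $Q$.
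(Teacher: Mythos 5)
Your overall strategy --- run through the non-hereditary quivers in the mutation class one by one and exhibit, for each, a coefficient in some $\D(e_a)$ that the bimodule conditions cannot force to vanish --- is the same as the paper's, and the third layer of your plan (write $\D(e_v)$ symbolically at a well-chosen vertex and propagate via $\D(\a)=\D(e_{s(\a)})(1\otimes\a)=(\a\otimes 1)\D(e_{t(\a)})$) is essentially what the paper does in every case. The problem is that the two reductions you rely on to make the case analysis ``manageable'' do not survive contact with the actual relations of these algebras. Most of the non-hereditary $\mathbb{E}_6$ quivers (numbers $2$ through $17$ in the table) carry \emph{commutativity} relations such as $(4,2,3)-(4,5,3)$; these algebras are therefore not gentle and their $3$-cycles are not saturated in the sense of Section~2. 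Consequently neither the type $\mathbb{A}$ theorem nor the lemma on a saturated cycle attached through an arrow (whose hypotheses include $\D(\b)=\D(\c)=0$ and $\c\b=0$) ``applies verbatim to a sub-configuration''; moreover the restriction of a coproduct of $\A$ to a subquiver is not in general a coproduct of the subalgebra, and the paper never argues this way for $\mathbb{E}_6$. Instead it isolates a small number of genuinely new local patterns --- a valency-three vertex through which two compositions vanish (quivers $2$--$12$), a five-vertex configuration with one commutativity and two zero relations (quivers $13$--$16$), the full quiver $17$ treated by hand, and a valency-two vertex sitting on a zero relation (quivers $18$--$21$) --- and checks directly in each that the distinguished scalar is a free parameter.

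Two further concrete points. First, your claim that non-hereditary is equivalent to containing a $3$-cycle is false: quivers $20$ and $21$ contain a chordless oriented $4$-cycle and $5$-cycle respectively and no $3$-cycle at all, so your ``saturated $3$-cycle'' layer has nothing to grab onto there; the paper handles them by producing paths $\mu,\rho$ through a valency-two vertex $a$ with $\rho\b_2=0=\b_1\mu$, which forces the term $A\,\mu\otimes\rho$ to be unconstrained. Second, the appeal to the central-cycle proposition of Section~3 is at best heuristic: that result is proved for quivers in $\MM^D_{\rm IV}$, whose attached subquivers and relations are constrained in a way the $\mathbb{E}_6$ quivers need not respect, so it cannot simply be cited. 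None of this shows your method is doomed --- the direct symbolic propagation of your third layer would eventually succeed on every quiver --- but as written the proposal delegates the bulk of the work to lemmas whose hypotheses fail for these algebras, so it is a plan with genuine gaps rather than a proof.
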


\begin{proof}
Observe that we are reduced to prove  that every quiver on the table above,  except the first one, has a vertex $a$ where any coproduct $\D$ satisfies $\D(e_a)\neq 0$. For quivers $2, \cdots, 12$ there is a vertex $a$ of valency $3$ of the following  type (or its dual)

%$$\SelectTips{eu}{10}\xymatrix@R=1pc@C=1pc{    \scriptstyle{a} \ar[ddrr]^{\mu} & &\cdot \ar[ll]_{\alpha} \\
%\ar@{.}@/_/[r] &  \ar@{.}@/_/[u] &\\    \cdot \ar[uu]^{\beta}  &  & \cdot }$$

 $$ \begin{array}{cc} \SelectTips{eu}{10}\xymatrix@R=1pc@C=1pc{    \scriptstyle{a} \ar[ddrr]^{\mu} & &\cdot \ar[ll]_{\alpha} \\
 &  & \\    \cdot \ar[uu]^{\beta}  &  & \cdot }   &   \scriptsize{ \begin{array}{c}  \\  \\ \\ \alpha\mu =0 \\  \beta\mu=0 \end{array} } \end{array} $$

Then, $\D(e_a)=A\mu\otimes \alpha + B \mu\otimes \beta +  \cdots $ where the relations $\alpha\mu =0 $ and $\beta\mu=0$ implies  $A,B\neq 0$.\\

%C \mu\otimes e_a +
Quivers $13, \cdots, 16$  have the following subquiver (or its dual)

 $$ \begin{array}{cc} \SelectTips{eu}{10}\xymatrix@R=1pc@C=1pc{    & & & \scriptstyle{.} \ar[dr]^{\beta_2} &  \\   && \scriptstyle{.} \ar[ur]^{\beta_1} \ar[d]_{\alpha_2}   && \scriptstyle{a}  \ar[ll]_{\alpha_1}  \\  & & \scriptstyle{.} \ar[rr]_{\alpha_3} & & \scriptstyle{b} \ar[u]_{\alpha_4} }  &   \scriptsize{ \begin{array}{c}  \\  \\ \\ \b_1\b_2=\a_2\a_3\a_4 \\ \a_1\b_1=0 \\ \b_2\a_1=0 \\ \end{array} } \end{array} $$

with  $a$  a vertex of valency $3$  and $b$ a vertex  of valency $2$.  Then $\D(e_a)=A\a_1\otimes \alpha_2\a_3\a_4 + \cdots $ and  $\D(e_b)=A\a_4\a_1\otimes \alpha_2\a_3 + \cdots $.  The relations on the quiver  implies  $A\neq 0$.\\

Quiver $17 $  is the following

$$ \begin{array}{cc} \SelectTips{eu}{10}\xymatrix@R=1.8pc@C=1.8pc{   \scriptstyle{1} \ar[r]^{\a_1} & \scriptstyle{2} \ar[d]_{\a_2}  & \scriptstyle{3} \ar[l]_{\a_5} \\  \scriptstyle{4} \ar[u]^{\a_4} & \scriptstyle{5} \ar[l]^{\a_3} \ar[r]_{\a_7} &   \scriptstyle{6} \ar[u]_{\a_6} } &   \scriptsize{ \begin{array}{c}  \\  \\ \\ \a_1\a_2\a_3=0=\a_5\a_2\a_7 \\ \a_2\a_3\a_4=0=\a_2\a_7\a_6 \\ \a_4\a_1\a_2=0=\a_6\a_5\a_2 \\ \a_3\a_4\a_1=\a_7\a_6\a_5 \end{array} } \end{array} $$

Then, $\D(e_5)=A\a_7\a_6\a_5\otimes \alpha_5\a_2 + \cdots $ and $\D(e_4)=A\a_4\a_1\otimes \alpha_5\a_2\a_3 + \cdots $ with $A\neq 0$.\\

For quivers $18$ and $19$ we have the following subquiver

$$ \begin{array}{cc} \SelectTips{eu}{10}\xymatrix@R=1pc@C=1pc{    & & & \scriptstyle{a} \ar[dr]^{\beta_2} &  \\   && \scriptstyle{.} \ar[ur]^{\beta_1} && \scriptstyle{.}  }  &   \scriptsize{ \begin{array}{c}  \\  \\ \\ \b_1\b_2=0 \\ \end{array} } \end{array} $$

with  $a$  a vertex of valency $2$. Then,   the relation $\b_1\b_2=0$  implies  that the term $A\b_2\otimes \b_1$ (with $A\neq 0$) appears in the expression of $\D(e_a)$.\\

Finally, for quivers $20$ and $21$ we have a generalization of the case above where there is  a vertex $a$ of valency $2$ as above but  $\b_1\b_2\neq 0$ and there are two non-zero paths $\mu$ and $\rho$ such that  $s(\mu)=t(\rho)=a$ and $\rho\b_2=0=\b_1\mu$. Then we can guarantee that the term  $A \mu\otimes \rho$ appears in the expression of $\D(e_a)$.
\end{proof}

\end{document}